\newtheorem{theorem}{Theorem}[section]
\newtheorem{lemma}[theorem]{Lemma}
\newtheorem{proposition}[theorem]{Proposition}
\theoremstyle{remark}
\numberwithin{equation}{section}
\newcommand{\norm}[1]{\lVert #1 \rVert}
\newcommand{\abs}[1]{\lvert #1 \rvert}
\DeclareMathOperator{\esp}{e}
\newcommand{\settc}[2]{\bigl\{\,#1 \bigm\vert #2\,\bigr\}}
\title{Traveling wave solutions to the Allen-Cahn equation}
\author{Chao-Nien Chen} 
\address[Chen]{Department of Mathematics, National Tsing Hua
University, Taiwan}
\email[Chen]{chen@math.nthu.edu.tw}
\author{Vittorio Coti Zelati}
\address[Coti Zelati]{Dipartimento di Matematica e Applicazioni ``R.
Caccioppoli'', Universit\`a degli Studi di Napoli ``Federico II'', via
Cintia, Monte S. Angelo, IT-80126 Napoli, Italy}
\email[Coti Zelati]{zelati@unina.it}
\subjclass[2020]{35K57 35C07 58E05}
\keywords{Allen-Cahn equation, traveling wave, variational method,
Ljusternik-Schnirelman Theory}
\thanks{\textbf{Acknowledgments}: Part of the work was done when Chen
was visiting at Università di Napoli Federico II and Coti Zelati was
visiting the National Tsing Hua University, Taiwan. Both Authors
thank the hosting institutions} 
\thanks{Research is supported in part by MOST 105-2115-M-007-009-MY3,
108-2115-M-007-009, Taiwan and in part by MIUR grant PRIN 2015
2015KB9WPT, ``Variational methods, with applications to problems in
mathematical physics and geometry''.}
\begin{document}

\begin{abstract}
	For the Allen-Cahn equation, it is well known that there is a
	monotone standing wave joining with the balanced wells of the
	potential. In this paper we study the existence of traveling wave
	solutions for the Allen-Cahn equation on an infinite channel. Such
	traveling wave solutions possess a large number of oscillation and
	they are obtained with the aid of variational arguments.
\end{abstract}

\maketitle

\section{Introduction}

Let $(\xi,y) \in \Omega := \mathbb{R}^1 \times \Omega_y$, a cylinder
with cross section $\Omega_y$. Here $\Omega_y$ is a bounded open set
in $\mathbb{R}^{N-1}$ with $C^{2,\alpha}$ boundary $\partial\Omega$,
$\alpha \in(0,1)$. We are concerned with the traveling wave solutions of
\begin{equation}
	\label{ac}
	\begin{cases}
		u_t=u_{\xi\xi}+\Delta_y u+u(1-u^2), \\
		u\vert_{\partial\Omega} = 0.
	\end{cases}
\end{equation}
The Allen-Cahn equation and related problems have been investigated
for several decades, because not only this nonlinear PDE serves as a
model \cite{Allen_Cahn_1979,BTW,S} in studying phase transition theory
but it has attracted a great attention from different fields in
mathematics (see e.g. \cite{dKW,M,RS} and the references therein).

Traveling wave played an important role in understanding dynamics of
evolution systems. There are many interesting results
\cite{Aronson_Weinberger_1975, Berestycki_Nirenberg_1992,
Fife_Mcleod_1977, Gallay_Risler_2007, Gardner_1986, Heinze_2001,
Lucia_Muratov_Novaga_2004, Lucia_Muratov_Novaga_2008, Reineck_1988,
Rinzel_Terman_1982, Risler_2008, Vega_1993, Vega_1993-1} for the
traveling wave solutions of
\begin{equation}
	\label{seq}
	u_t=u_{\xi\xi}+\Delta_y u+g(u), \qquad (\xi,y) \in \Omega, \ t>0.
\end{equation}
Particular examples include $g(u)=u(1-u)$ in the KPP equation and
$g(u)=u(1-u)(u-\beta)$ with $\beta \in(0,1/2)$ in the Nagumo equation.

A simple traveling wave can be represented by a function $u(\xi-ct)$
which satisfies \eqref{seq} with $c$ being a constant. Along the
moving coordinates with speed $c$, this planar traveling wave is a
solution of {an ordinary differential equation}. Such wave solutions
have been successfully investigated by shooting method
\cite{Aronson_Weinberger_1975, Fife_Mcleod_1977}. A wave with zero
speed is referred to as a standing wave. It is known that \eqref{ac}
possess a planar standing wave joining $1$ and $-1$, the double wells
of the potential. Our aim in this paper is to investigate the
traveling wave solutions of \eqref{ac}.

Following the ansatz proposed in \cite{Heinze_2001}, if $u(c(\xi-ct),
y)$ satisfies \eqref{ac} then
\begin{equation}
	\begin{cases}	
		\label{te} 
		c^2(u_{xx}+u_x)+\Delta_y u+u(1-u^2)=0, \\
		u\vert_{\partial\Omega}=0,
	\end{cases}
\end{equation}
where $x=c(\xi-ct)$. Denote by $L^p_w$ the Banach space of functions
in $L^{p}_{\text{loc}}(\Omega)$ equipped with the norm
\begin{equation*}
	\norm{u}_{L^p_w}^{p} = \int_{\Omega} e^x \abs{u}^p \, dx\, dy.
\end{equation*}
The appearence of weight function $e^x$ is due to the first order
derivative term $u_x$ in \eqref{te}. Choosing the ansatz $u(c(\xi-ct),
y)$ instead of $u(\xi-ct, y)$ seems to be more convenient in dealing
with $\Phi_c$ on function spaces with a fixed weight; for instance, in
studying the continuous dependence on $c$. Given $u \in
H^{1}_{\text{loc}}(\Omega)$ let $\norm{u}_{\textbf{E}}^2 =
\norm{u}_{L^2_w}^2 + \norm{u}_{L^4_w}^2 + \norm{u_x}_{L^2_w}^2 +
\norm{\nabla_y u}_{L^2_w}^2$. The set of functions with
$\norm{u}_{\textbf{E}} < \infty$ is denote by $\textbf{E}$, while
${\textbf{E}}_0$ is the completion of $C_0^{\infty}(\Omega)$ under the
norm $\norm{\cdot}_{\textbf{E}}$.

Let $F(w) := -\frac{w^2}{2} + \frac{w^4}{4}$ and
\begin{equation}
	\label{Phi}
	\Phi_c[w] := \frac{c^2}{2}\int_{\Omega}e^xw_x^2\, dx\, dy +
	\int_{\Omega} e^x \left(\frac{1}{2} \abs{\nabla_y w}^2 + F(w)
	\right) dx\, dy.
\end{equation}
By standard theory of calculus of variations, a critical point of
$\Phi_c$ in ${\textbf{E}}_0$ is a solution of \eqref{te}. 

Consider a cross section $\Omega_{y}$ and the boundary value problem
\begin{equation}
	\label{b}
	\begin{cases}
		\Delta_y u + u(1 - u^2) = 0,\\
		u\vert_{\partial\Omega_y}=0.	
	\end{cases}
\end{equation}
The existence of multiple solutions to \eqref{b} has been established
by Variational methods. These solutions are the critical points of the
functional $J \colon H^1_0(\Omega_y) \to \mathbb{R}$ defined by
\begin{equation}
	\label{een}
	J[v] := \int_{\Omega_y} \left(\frac{1}{2} \abs{\nabla_y v}^2 +
	F(v) \right) dy.
\end{equation}
Denote by $\lambda_{1} < \lambda_{2} \leq \lambda_{3} \leq \ldots$ the
eigenvalues of
\begin{equation} 
	\label{ev}
	\begin{cases}
		\Delta_y \psi + \lambda \psi = 0, & \\
		\psi \vert_{\partial\Omega_y} = 0. &
	\end{cases}
\end{equation}
Clearly $u \equiv 0$ is a trivial solution of \eqref{b} and $J[0] =
0$. If $\lambda_1 < 1$, it is known \cite{Chen_1990, Heinz_1986-1,
Rabinowitz_1971} that there is a unique positive solution $u_+$ for
\eqref{b} and $J[u_+]= \inf_{v \in H^1_0(\Omega_y)} J[v] < 0$ (see
proposition \ref{y}).

Our goal is to show the following result. 
\begin{theorem}
	\label{main}
	Let $\Omega_{y}$ be a $C^{2,\alpha}$ bounded domain and
	$\lambda_{j}$ be the eigenvalues of \eqref{ev}. 
	\begin{enumerate}
		
		\item [(i)] Assume that $\lambda_{1} < 1 \leq \lambda_{2}$.
		Then for every $c \in (0,2\sqrt{(1 - \lambda_{1})})$, there is
		a traveling wave solution $u$ of \eqref{ac} with wave speed
		$c$ such that $u(c(\xi-ct),y) \to 0$ as $t \to -\infty$ and
		$u(c(\xi-ct),y) \to u_+(y)$ as $t \to +\infty$.
		
		\item [(ii)] If $J$ has finite number of critical points in
		$H^1_0(\Omega_y)$, then for every $c \in (0,2\sqrt{(1 -
		\lambda_{1})})$, there is a traveling wave solution $u$ of
		\eqref{ac} with wave speed $c$ such that $u(c(\xi-ct),y) \to
		u^*$ as $t \to -\infty$ and $u(c(\xi-ct),y) \to u_*(y)$ as $t
		\to +\infty$, where $u_*$,$u^*$ are two critical points of $J$
		such that $J[u_*] < J[u^*]$.
        
		\item [(iii)] $-u$ is also a traveling wave solution of
		\eqref{ac}.
	\end{enumerate}
\end{theorem}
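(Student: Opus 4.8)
The plan is to realise the traveling wave as a nontrivial critical point of $\Phi_c$ on $\mathbf{E}_0$ and then read off the end states from the gradient structure in the $x$-variable. First I would record the two algebraic features of $\Phi_c$ that govern everything. Expanding $w(x,y)=\sum_k a_k(x)\psi_k(y)$ in the eigenbasis of \eqref{ev} and using the sharp weighted Hardy inequality $\int_{\mathbb{R}} e^x a'^2\,dx \ge \tfrac14\int_{\mathbb{R}} e^x a^2\,dx$ (obtained via the substitution $a=e^{-x/2}\phi$ and completing the square), the quadratic part of $\Phi_c$ in the $k$-th mode is governed by the sign of $\tfrac{c^2}{4}+\lambda_k-1$. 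Under hypothesis (i) every mode $k\ge 2$ has $\lambda_k\ge 1$ and is coercive, whereas the first mode is indefinite precisely when $c<2\sqrt{1-\lambda_1}$; thus $w\equiv 0$ is a saddle whose instability is carried by a single eigendirection, and the quartic term $\tfrac14\|w\|_{L^4_w}^4$ restores coercivity far from the origin. The second feature is the translation covariance $\Phi_c[w(\cdot-s,\cdot)]=e^s\Phi_c[w]$, which formally forces the critical value to vanish and breaks compactness along $x$-translations; it already signals that the limiting connection need not lie in $\mathbf{E}_0$ and that the energy must be understood in a renormalised or truncated sense.

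Because of this indefiniteness and covariance a direct minimisation or a plain mountain-pass around $0$ cannot work, so I would set up a linking (equivalently, a truncated-domain) scheme. Concretely I would work on finite cylinders $\Omega_R=(-R,R)\times\Omega_y$ with asymptotic data $w=u_+$ on $\{x=-R\}$ and $w=0$ on $\{x=R\}$, where the restricted functional $\Phi_c^R$ is of linking type between $0$ and a deformation towards $u_+$: the single unstable first mode together with the quartic coercivity yields a linking geometry with a positive minimax value $\beta_R$. The crucial, and hardest, step is the compactness analysis: I must verify a Palais--Smale (or Cerami) condition for $\Phi_c^R$ and, above all, obtain bounds on $\beta_R$ and on the corresponding critical points $w_R$ that are uniform in $R$. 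This is where the weight interacts delicately with the $e^{-x/2}$ oscillatory decay dictated by the complex characteristic roots of the linearisation at $0$ (exactly the source of the ``large number of oscillations''), and controlling it uniformly is the main obstacle of the whole argument.

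With uniform estimates in hand I would let $R\to\infty$ and use interior elliptic regularity to extract a solution $w$ of \eqref{te} on the full cylinder. To identify the ends I would exploit the monotone Lyapunov functional $E(x)=\tfrac{c^2}{2}\int_{\Omega_y} w_x^2\,dy-J[w(x,\cdot)]$, which satisfies $E'(x)=-c^2\int_{\Omega_y} w_x^2\,dy\le 0$ along \eqref{te}. Combined with the a priori bounds this forces $w(x,\cdot)$ to converge in $H^1_0(\Omega_y)$ to critical points of $J$ as $x\to\pm\infty$; finiteness of the weighted energy forces the limit at $x=+\infty$ to be a critical point of $J$ of nonnegative value, hence $0$, while Proposition~\ref{y} gives $J[u_+]<J[0]=0$, and the monotonicity of $E$ pins down the limits as $u_+$ at $-\infty$ and $0$ at $+\infty$. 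Returning to the variables $(\xi,t)$ gives statement (i), with the oscillatory approach to $0$ coming from the complex roots when $c<2\sqrt{1-\lambda_1}$.

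For (ii) I would run the same scheme between two consecutive critical values of $J$, now renormalising $\Phi_c$ relative to the $x=+\infty$ background state (whose weighted energy is infinite) so that the functional is finite on the relevant class. Finiteness of the critical set guarantees that the minimax levels are attained at genuine equilibria and lets me invoke Ljusternik--Schnirelman theory to produce a connection joining critical points $u_*,u^*$ of $J$; the monotonicity of $E$ forces $u^*$ (the higher level) at $x=+\infty$ and $u_*$ at $x=-\infty$, whence $J[u_*]<J[u^*]$. Finally (iii) is immediate: since $F$ is even the nonlinearity $u(1-u^2)$ is odd, so \eqref{ac} and its boundary condition are invariant under $u\mapsto -u$, and $-u$ is a traveling wave of the same speed whenever $u$ is.
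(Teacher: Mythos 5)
Your spectral computation and the role of the threshold $c<2\sqrt{1-\lambda_{1}}$ match the paper's, and part (iii) is handled the same trivial way, but two of your structural claims are wrong and the step you yourself flag as ``the main obstacle'' is exactly where the proof has to live. First, the instability of $w\equiv 0$ is \emph{not} carried by a single eigendirection: after the substitution $a=e^{-x/2}\phi$ the first-mode quadratic form becomes $c^{2}\int \phi'^{2}-\bigl(1-\lambda_{1}-\tfrac{c^{2}}{4}\bigr)\int \phi^{2}$, whose negative cone is infinite-dimensional (on a cylinder of length $R$ the number of negative Dirichlet eigenvalues grows linearly in $R$). This is precisely what the paper exploits --- the functions $\phi_{k}=e^{-x/2}\sin(k\pi x/L)\varphi_{+}$ span sets of arbitrarily large Krasnoselski genus on which $I_{c}<0$ --- and it undermines your linking geometry: there is no fixed finite-dimensional splitting giving a positive level $\beta_{R}$, and on the affine class with datum $u_{+}$ at $x=-R$ the weighted functional is unbounded below (roughly like $J[u_{+}]e^{R}$). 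The paper instead runs a genus minimax on the half-cylinder $\Omega_{*}$, obtaining critical points $\hat u_{k}$ at negative levels $c_{k}\uparrow 0$ (Proposition~\ref{prop:LS}), and produces the wave by translating $\hat u_{k}$ by amounts $x_{k}\to-\infty$ chosen so that a fixed quantity $\mu/8$ of transition away from $u_{+}$ is pinned in a unit window \eqref{eq:stimeLimite}; that normalization is what makes the limit $U$ nontrivial, i.e.\ it replaces the uniform-in-$R$ bounds you leave entirely open.

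Second, your identification of the state at $x=+\infty$ by ``finiteness of the weighted energy'' fails: the approach of the wave to $0$ is governed by the complex roots $r=-\tfrac12\pm i\omega$ of $c^{2}(r^{2}+r)+1-\lambda_{1}=0$, so $U$ decays like $e^{-x/2}$ times an oscillation and $e^{x}U^{2}$ is not integrable at $+\infty$; the paper explicitly remarks that these solutions belong to neither $\mathbf{E}_{0}$ nor $\mathbf{H}_{0}$, so no argument based on membership in the weighted space can identify the end state. The ends are instead identified through the energy identity \eqref{v-+}, $-J[v_{-\infty}]+J[v_{\infty}]=c^{2}\int U_{x}^{2}>0$, combined with the classification of critical points of $J$ (Proposition~\ref{y}); moreover your Lyapunov function $E(x)$ only yields convergence of the energy, not of $U(x,\cdot)$ itself --- ruling out drifting between distinct equilibria requires the isolated-critical-point argument of Lemma~\ref{uk-1}, which you do not supply. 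Finally, for (ii) the paper does not ``connect two consecutive critical values'' via a separate Ljusternik--Schnirelman argument: it reuses the same construction, with $u_{*}=\lim_{x\to-\infty}\hat u_{k}$ (a critical point of $J$ with $J[u_{*}]<0$, guaranteed by Lemma~\ref{uk-1} under the finiteness hypothesis) and $u^{*}=v_{\infty}$ then forced by \eqref{v-+} to satisfy $J[u_{*}]<J[u^{*}]$.
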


In particular $J$ has finite number of critical points in
$H^1_0(\Omega_y)$ if $\Omega_y$ is an interval. Indeed
\cite{Chen_1990-1,Chen_1993} there are $2k+1$ critical points if
$\lambda_{k} < 1 \leq \lambda_{k+1}$.

For the scalar reaction-diffusion equation \eqref{seq}, the ordered
method has been developed to show the existence of traveling waves on
the cylindrical domain \cite{Berestycki_Nirenberg_1992,Vega_1993-1}.
As a consequence of the maximum principle
\cite{Gilbarg_Trudinger_1983}, such traveling front solutions possess
certain monotonicity properties. For instance, let $v_+(y)$ and
$v_-(y)$ be the stable solutions of
\begin{equation}
	\label{g}
	\begin{cases}
		v_t = \Delta_y v + g(v),\\
		v\vert_{\partial\Omega_y}=0.	
	\end{cases}
\end{equation}
If
\begin{equation*}
	 v_+(y) > v_-(y) \text{ for all } y\in \Omega_y,
\end{equation*} 
Vega \cite{Vega_1993-1} proved for \eqref{seq} the existence of a
traveling front solution $w(\xi-ct,y)$ which satisfies
\begin{equation}
		v_+(y) > w(x,y)>v_-(y) \text{ for all } (x,y) \in
		\mathbb{R}\times\Omega_ y. \label{0.15}
\end{equation}
Moreover the method of moving planes and the sliding method
\cite{Berestycki_Nirenberg_1991} show that such a wave is strictly
monotone in the $x$-direction. Based on the extension of comparison
technique, the ordered method has been generalized to studying
traveling front solutions in monotone systems
\cite{Volpert_Volpert_Volpert_1994}. In an earlier work
\cite{Gardner_1986}, Gardner considered a discretization of
\eqref{seq} and applied the Conley index to establish an existence
result similar to \cite{Vega_1993-1}. The monotonicity properties for
traveling waves in combustion models have been studied in \cite{BLL}.

More recently variational methods have been employed to investigate
the traveling wave solutions for reaction-diffusion equations with
Ginzburg-Landau or bistable type nonlinearities. In \cite{Heinze_2001,
Lucia_Muratov_Novaga_2008, Lucia_Muratov_Novaga_2004}, the authors
proved existence of traveling waves via constrained minimization in a
weighted Sobolev space like ${\textbf{E}}_0$. This constrained
minimization requires the traveling front solution stay in the
weighted Sobolev space and leads the solution to have certain
monotonicity properties.

The remainder of this paper is organized as follows. Section
\ref{sec_pre} begins with some known results as a preliminary. For the
traveling wave solution with a given speed $c$, a variational
formulation is introduced in Section \ref{sec_variation} to establish
a sequence of approximated solutions through a mini-max scheme based
on the Krasnoselski genus. As the number of genus is increasing to
infinity along this sequence of approximated solutions, it is expected
\cite{Heinz_1986,Heinz_1986-1,Heinz_1987} that such a traveling wave
solution should possess a large number of oscillation as a limit of
the approximated solutions.

In using variational approach to study traveling wave solution, a
commonly used weighted Sobolev space \cite{Heinze_2001,
Lucia_Muratov_Novaga_2008, Lucia_Muratov_Novaga_2004} is the Hilbert
space $\textbf{H}$ or ${\textbf{H}}_0$ equipped with the norm
$\norm{\cdot}_{\textbf{H}}$, where $\norm{u}_{\textbf{H}}^2 =
\norm{u}_{L^2_w}^2 + \norm{u_x}_{L^2_w}^2 + \norm{\nabla_y
u}_{L^2_w}^2$. Choosing the space ${\textbf{E}}_0$ enables us to work
out the boundedness of the solutions and the compactness of
Palais-Smale sequences in dealing with the mini-max argument. Then in
Section \ref{sec_lim}, utilizing a suitable limit procedure, we
establish the traveling wave solutions. Moreover, as stated in
Theorem~\ref{main}, there are infinite number of traveling wave
solutions which are distinguished by their speed. To the best of our
knowledge, using mini-max method to establish traveling wave solution
seems to be new and such a class of traveling front solutions have not
been studied before. It should be interesting to point out that all
the traveling front solutions stated in Theorem~\ref{main} do not
belong to ${\textbf{E}}_0$ or ${\textbf{H}}_0$. Because of this fact,
we need a delicate procedure in passing to the limit form the
approximated solutions; however this procedure does not keep tracking
the number of oscillation. It is not clear to us if a direct argument
is available for the proof. We remark that the Allen-Cahn equation has
infinite number of planar traveling front solutions which satisfy
$\lim_{t \to -\infty } u(\xi-ct) = 0$ and $\lim_{t \to \infty }
u(\xi-ct) = 1$. These solutions change sign infinitely many times.

\section{Preliminary} 
\label{sec_pre}

We state some useful inequalities whose proofs can be found in
\cite{Lucia_Muratov_Novaga_2008, Muratov_2004}.

\begin{lemma}
	\label{pro1}
	If $w(x,y)$ is such that $\norm{w}_{L^2_w}^2 +
	\norm{w_x}_{L^2_w}^2 + \norm{\nabla_y w}_{L^2_w}^2 < + \infty$,
	then
	\begin{align}
		\int^{+\infty}_{r}\int_{\Omega_y} e^x w^2 \, dxdy \leq 4
		\int^{+\infty}_{r}\int_{\Omega_y} e^x w_x^2 \, dxdy,\label{h1}
		\\
		\int_{\Omega_y} w^2(r,y) \, dy \leq e^{-r} \int^{+\infty}_{r}
		\int_{\Omega_y} e^x w_x^2 \, dxdy, \label{h2}
	\end{align}
	for any $r \in \mathbb{R}$; in particular
	\begin{equation}
		\int_{\Omega} e^x w^2 \, dxdy \leq 4 \int_{\Omega} e^x w_x^2\,
		dxdy.\label{pineq}
	\end{equation}
\end{lemma}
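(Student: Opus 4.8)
The plan is to reduce all three inequalities to a one-dimensional weighted Hardy inequality in the $x$-variable and then integrate over the cross section $\Omega_y$. By Fubini's theorem, the finiteness hypothesis guarantees that for almost every $y \in \Omega_y$ the slice $f(x) := w(x,y)$ satisfies $\int_{\mathbb{R}} e^x f^2 \, dx < \infty$ and $\int_{\mathbb{R}} e^x (f')^2 \, dx < \infty$. I fix such a $y$, suppress it from the notation, and note that on any compact interval $e^x$ is bounded below by a positive constant, so $f' \in L^1_{\text{loc}}$ and $f$ is (a representative which is) locally absolutely continuous.

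First I would establish \eqref{h2}, which is the cleanest and also supplies the decay needed afterwards. For $a<b$ one has $f(b)-f(a) = \int_a^b f'\,dx$, and Cauchy--Schwarz with the splitting $f' = e^{-x/2}\cdot e^{x/2}f'$ gives $\abs{f(b)-f(a)} \le (e^{-a})^{1/2}\bigl(\int_a^\infty e^x (f')^2\,dx\bigr)^{1/2}$. Letting $a,b \to +\infty$ shows $f$ has a limit at $+\infty$, and that limit must be $0$, since otherwise $\int e^x f^2\,dx$ would diverge. Hence $f(r) = -\int_r^\infty f'\,dx$, and the same estimate yields
\[
	f(r)^2 \le e^{-r}\int_r^\infty e^x (f')^2\,dx,
\]
which is \eqref{h2} after integrating in $y$.

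Next I would prove \eqref{h1}. The estimate just obtained gives the pointwise decay $e^x f(x)^2 \le \int_x^\infty e^s (f')^2\,ds \to 0$ as $x\to+\infty$, so the boundary term at $+\infty$ vanishes and an integration by parts is legitimate:
\[
	\int_r^\infty e^x f^2 \, dx = -e^r f(r)^2 - 2\int_r^\infty e^x f f' \, dx \le -2\int_r^\infty e^x f f' \, dx.
\]
Applying Cauchy--Schwarz to the last integral and writing $A=\int_r^\infty e^x f^2\,dx$ and $B = \int_r^\infty e^x (f')^2\,dx$, this reads $A \le 2\sqrt{A}\sqrt{B}$, whence $A \le 4B$. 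Integrating over $\Omega_y$ gives \eqref{h1}, and letting $r \to -\infty$ (monotone convergence on both sides) yields \eqref{pineq}.

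The main obstacle is the technical justification in the second step: that the weighted hypotheses \emph{alone} force each slice to be absolutely continuous with vanishing limit at $+\infty$, so that the fundamental theorem of calculus and the integration by parts are valid. Once the decay $e^x f^2 \to 0$ is secured, the inequalities follow from Cauchy--Schwarz with the sharp constant $4=2^2$. An alternative route would be to prove the three inequalities first for $w \in C_0^\infty(\Omega)$, where every boundary term is trivially zero, and then pass to the limit; but since the lemma is stated for every $w$ of finite norm rather than only for elements of the completion $\textbf{E}_0$, I prefer the direct slicewise argument above.
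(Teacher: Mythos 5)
Your argument is correct, and the constant $4$ comes out exactly as in the statement. Note that the paper itself does not prove Lemma \ref{pro1}; it cites \cite{Lucia_Muratov_Novaga_2008, Muratov_2004}, where the inequalities are obtained by essentially the same device you use: the identity $\int e^x w^2 = -e^r w(r)^2 - 2\int e^x w\, w_x$ followed by Cauchy--Schwarz, so your route matches the standard one. The two points that genuinely need care are the ones you flag: (a) that for a.e.\ $y$ the slice is absolutely continuous with the weak derivative as its a.e.\ derivative (standard for Sobolev representatives, and the local lower bound on $e^x$ makes $w_x(\cdot,y)\in L^1_{\mathrm{loc}}$), and (b) that the boundary term at $+\infty$ vanishes. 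Your derivation of (b) is sound: the Cauchy estimate $\abs{f(b)-f(a)}\le e^{-a/2}\bigl(\int_a^{\infty}e^x(f')^2\,dx\bigr)^{1/2}$ forces a limit at $+\infty$, integrability of $e^x f^2$ forces that limit to be $0$, and then $e^x f(x)^2\le\int_x^\infty e^s (f')^2\,ds\to 0$ legitimizes the integration by parts. The only cosmetic remark is that in the step $A\le 2\sqrt{A}\sqrt{B}$ you should note $A<\infty$ (which holds for a.e.\ $y$ by Fubini) before cancelling $\sqrt{A}$; with that observation the proof of \eqref{h1}, \eqref{h2} and \eqref{pineq} is complete.
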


For the non-trivial solutions of \eqref{b}, some existence and
uniqueness results can be found in \cite{Chen_1990-1, Chen_1993,
Heinz_1986, Heinz_1987, Rabinowitz_1971}. Part of such results are
stated in the next proposition.
\begin{proposition}
	\label{y} 
	Let $\Omega_{y}$ be a bounded open set in $\mathbb{R}^{n}$  
	and $\lambda_{i}$ the eigenvalues of \eqref{ev}.

	\begin{enumerate}
		\item[(a)] If $\lambda_1<1$ there is a unique positive
		solution $u_+$ for \eqref{b} and $J[u_+]= \inf_{v \in
		H^1_0(\Omega_y)} J[v] < 0$.
		
		\item[(b)] If $\lambda_1 < 1 \leq \lambda_2$ then $u_+$ and
		$-u_+$ are the only non-trivial solutions of \eqref{b}.
	\end{enumerate}
\end{proposition}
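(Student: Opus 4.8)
The plan is to obtain part~(a) by the direct method of the calculus of variations, its uniqueness clause from the strict monotonicity of the ratio $f(w)/w$ with $f(w)=w(1-w^2)$, and part~(b) from a min-max characterization of $\lambda_2$.

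For existence in (a), I would first show that $J$ is coercive and bounded below on $H^1_0(\Omega_y)$. Since $F(w) = \frac{w^4}{4} - \frac{w^2}{2} \ge -\frac14$, one has $J[v] \ge \frac12\int_{\Omega_y}\abs{\nabla_y v}^2\,dy - \frac14\abs{\Omega_y}$, and Poincar\'e's inequality turns this into coercivity in the $H^1_0$-norm. A minimizing sequence is therefore bounded; passing to a subsequence, $v_k \weakly v$ in $H^1_0(\Omega_y)$, and by the Rellich--Kondrachov theorem $v_k \to v$ strongly in $L^2$ (and in $L^4$ in the subcritical range, the general case being covered by the cited references). Weak lower semicontinuity of the Dirichlet term together with these strong convergences gives $J[v] \le \liminf_k J[v_k]$, so $v$ is a minimizer and solves \eqref{b}. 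To see the minimum is negative, let $\psi_1 > 0$ be a first eigenfunction of \eqref{ev}; since $\int_{\Omega_y}\abs{\nabla_y\psi_1}^2 = \lambda_1\int_{\Omega_y}\psi_1^2$, we get $J[t\psi_1] = \frac{t^2}{2}(\lambda_1 - 1)\int_{\Omega_y}\psi_1^2\,dy + \frac{t^4}{4}\int_{\Omega_y}\psi_1^4\,dy$, which is negative for small $t>0$ because $\lambda_1 < 1$. Hence $\inf J < 0 = J[0]$, so $v \not\equiv 0$. Because $F$ is even and $\abs{\nabla_y\abs{v}} = \abs{\nabla_y v}$ a.e., $\abs{v}$ is again a minimizer, so we may take $v \ge 0$; the strong maximum principle applied to $-\Delta_y v = v(1-v^2)$ then forces $v > 0$ in $\Omega_y$. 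Set $u_+ := v$.

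For uniqueness of the positive solution I would invoke the strict monotonicity of $w \mapsto f(w)/w = 1 - w^2$ on $(0,\infty)$. By the Brezis--Oswald argument (a Picone-type identity, equivalently the hidden convexity of $J$ in the variable $\sigma = v^2$), two positive solutions $u_1, u_2$ satisfy $\int_{\Omega_y}\bigl(\tfrac{f(u_1)}{u_1} - \tfrac{f(u_2)}{u_2}\bigr)(u_1^2 - u_2^2)\,dy \le 0$; since the integrand is pointwise $\ge 0$ by monotonicity, $u_1 \equiv u_2$, giving the unique $u_+$ and completing (a). For part~(b), assume $\lambda_2 \ge 1$ and suppose, for contradiction, that \eqref{b} has a sign-changing solution $u$ (otherwise $u$ or $-u$ is positive, hence equals $u_+$). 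Writing $u^\pm = \max(\pm u, 0)$, both are nonzero with disjoint supports, and testing \eqref{b} against $u^\pm$ gives $\int_{\Omega_y}\abs{\nabla_y u^\pm}^2\,dy = \int_{\Omega_y}(u^\pm)^2\,dy - \int_{\Omega_y}(u^\pm)^4\,dy$, so the Rayleigh quotient of each of $u^+, u^-$ is strictly less than $1$. On $V = \mathrm{span}\{u^+, u^-\}$ the disjoint supports make the Rayleigh quotient of any $w \in V \setminus \{0\}$ a weighted average of those two quotients, hence still $< 1$. The characterization $\lambda_2 = \min_{\dim V = 2}\max_{w \in V\setminus\{0\}}\bigl(\int_{\Omega_y}\abs{\nabla_y w}^2 / \int_{\Omega_y}w^2\bigr)$ then yields $\lambda_2 < 1$, contradicting the hypothesis; thus $u_+$ and $-u_+$ are the only nontrivial solutions.

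The routine part is the direct method; the delicate point is the uniqueness in (a), where the monotonicity of $f(w)/w$ must be converted into a rigorous comparison identity. The clean structural input for (b) is that the energy identity forces the Rayleigh quotient of any solution below $1$, which is incompatible with $\lambda_2 \ge 1$ once the solution possesses two nodal domains.
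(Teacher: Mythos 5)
The paper itself does not prove Proposition \ref{y}: it is quoted as a known result, with the reader referred to the cited works of Chen, Heinz and Rabinowitz, so there is no in-paper argument to compare against. Your self-contained proof follows the standard route and is essentially correct: direct method plus the competitor $t\psi_1$ for existence and negativity of the infimum in (a), a Brezis--Oswald/Picone argument for uniqueness of the positive solution, and the two-nodal-domain versus $\lambda_2$ argument for (b). Two points deserve attention. First, in the uniqueness step you have both inequalities attributed with the wrong sign: since $f(w)/w = 1-w^2$ is strictly \emph{decreasing} on $(0,\infty)$, the integrand $\bigl(\tfrac{f(u_1)}{u_1}-\tfrac{f(u_2)}{u_2}\bigr)(u_1^2-u_2^2)$ is pointwise $\le 0$, whereas the Picone (hidden convexity) identity gives that its \emph{integral} is $\ge 0$; the conclusion $u_1\equiv u_2$ still follows from the corrected pair of inequalities, but as written each individual claim is false. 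Second, for $\Omega_y\subset\mathbb{R}^{N-1}$ with $N-1\ge 5$ the embedding $H^1_0\hookrightarrow L^4$ fails, so the minimization should be posed on $H^1_0\cap L^4$; since the quartic term is convex and nonnegative, it is weakly lower semicontinuous and the direct method still closes --- you hedge on this, but it is worth making explicit rather than deferring to the references. The argument for (b) is sound: testing against $u^{\pm}$ forces the Rayleigh quotient of each strictly below $1$, the cross terms on $\mathrm{span}\{u^+,u^-\}$ vanish by disjointness of supports, and Courant--Fischer then contradicts $\lambda_2\ge 1$; combined with the strong maximum principle and the uniqueness from (a), this yields that $\pm u_+$ are the only nontrivial solutions.
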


\section{Variational framework} 
\label{sec_variation}

In this section, a variational framework will be used to construct
approximation solutions to a traveling wave solution of \eqref{ac}.
Let $\Omega_* := (-\infty,0) \times \Omega_y$ and consider the
following boundary value problem:
\begin{equation}
	\label{LS}
	\begin{cases}
		c^2(u_{xx} + u_x) + \Delta_y u + u(1-u^2) = 0,\\
		u\vert_{\partial\Omega_*} = 0.
	\end{cases}
\end{equation}
Let $\mathbf{E}_{*}$ be the closure of $C^{\infty}_{0}(\Omega_{*})$ in
$\mathbf{E}$ and, for all $u \in \mathbf{E}_{*}$
\begin{equation}
	\label{gradE}
	I_c[u]:= \int_{-\infty}^{0} \int_{\Omega_y}
	\left(\frac{c^2}{2}u_x^2 + \frac{1}{2} \abs{\nabla_y u}^2
	-\frac{1}{2} u^{2} + \frac{1}{4} u^{4} \right)e^x \, dxdy.
\end{equation}

\begin{proposition}
	The functional $I_{c} \in C^{1}(\mathbf{E}_{*};
	\mathbb{R})$ and is bounded from below.
\end{proposition}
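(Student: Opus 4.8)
The plan is to split $I_c$ into its (non-negative) gradient part, the quadratic term $-\tfrac12\int e^x u^2$, and the quartic term $\tfrac14\int e^x u^4$, and to treat the two assertions separately.

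For boundedness from below, the decisive observation is that the weight $e^x$ is \emph{integrable} over the half-cylinder:
\[
\int_{\Omega_*} e^x \, dx\, dy = \Bigl(\int_{-\infty}^{0} e^x \, dx\Bigr)\,\abs{\Omega_y} = \abs{\Omega_y} < \infty .
\]
Since $F(w) = -\tfrac{w^2}{2} + \tfrac{w^4}{4} = \tfrac14(w^2-1)^2 - \tfrac14 \geq -\tfrac14$ pointwise and the gradient terms $\tfrac{c^2}{2}u_x^2 + \tfrac12\abs{\nabla_y u}^2$ are non-negative, I would simply estimate
\[
I_c[u] \geq \int_{\Omega_*} e^x F(u)\, dx\, dy \geq -\tfrac14 \int_{\Omega_*} e^x\, dx\, dy = -\tfrac14 \abs{\Omega_y},
\]
valid for every $u \in \mathbf{E}_*$. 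It is precisely the restriction to $\Omega_* = (-\infty,0)\times\Omega_y$ rather than the whole $\Omega$ that makes this immediate; on $\Omega$ the weight is not integrable and one would instead need the Poincar\'e inequality of Lemma~\ref{pro1} together with $\lambda_1$.

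For the $C^1$ regularity, finiteness of $I_c$ on $\mathbf{E}_*$ follows term by term from the definition of $\norm{\cdot}_{\mathbf{E}}$: the gradient and quadratic terms are dominated by $\norm{u}_{\mathbf{E}}^2$, while the quartic term equals $\tfrac14\norm{u}_{L^4_w}^4 \leq \tfrac14\norm{u}_{\mathbf{E}}^4$. The gradient and quadratic parts are bounded bilinear forms, hence smooth, so the only piece needing work is $G[u] := \tfrac14\int_{\Omega_*} e^x u^4$. I would obtain its G\^ateaux derivative by expanding $(u+t\varphi)^4$ and letting $t \to 0$ (each coefficient integral being finite by H\"older in $L^4_w$), getting $G'[u]\varphi = \int_{\Omega_*} e^x u^3 \varphi$; boundedness of this functional follows from H\"older with exponents $4/3$ and $4$ after splitting $e^x = e^{3x/4}e^{x/4}$:
\[
\abs{G'[u]\varphi} \leq \int_{\Omega_*} e^x \abs{u}^3\abs{\varphi} \leq \norm{u}_{L^4_w}^3 \norm{\varphi}_{L^4_w} \leq \norm{u}_{\mathbf{E}}^3 \norm{\varphi}_{\mathbf{E}} .
\]

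The step demanding the most care, and the one I expect to be the main (though still routine) obstacle, is the continuity of $u \mapsto G'[u]$ into the dual of $\mathbf{E}_*$. Given $u_n \to u$ in $\mathbf{E}_*$, I would use the identity $u_n^3 - u^3 = (u_n-u)(u_n^2 + u_n u + u^2)$ together with fourfold H\"older in $L^4_w$ to estimate, uniformly for $\norm{\varphi}_{\mathbf{E}} \leq 1$,
\[
\abs{\bigl(G'[u_n]-G'[u]\bigr)\varphi} \leq C\,\norm{u_n - u}_{L^4_w}\bigl(\norm{u_n}_{L^4_w}^2 + \norm{u}_{L^4_w}^2\bigr)\norm{\varphi}_{L^4_w} .
\]
Since $\norm{u_n}_{L^4_w} \leq \norm{u_n}_{\mathbf{E}}$ stays bounded while $\norm{u_n - u}_{L^4_w} \to 0$, the right-hand side tends to $0$, so $G'[u_n] \to G'[u]$ in the dual of $\mathbf{E}_*$; continuity of the G\^ateaux derivative then upgrades $G$, and hence $I_c$, to class $C^1$. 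The whole differentiability argument hinges on the $L^4_w$ contribution built into $\norm{\cdot}_{\mathbf{E}}$, which is exactly what controls the quartic nonlinearity — the reason for working in $\mathbf{E}_*$ rather than in the smaller Hilbert space $\mathbf{H}_0$.
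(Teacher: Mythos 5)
Your proof is correct, and it supplies in full the argument the paper simply declares ``standard'' and omits: boundedness from below via $F(w)\geq -\tfrac14$ together with the integrability of the weight $e^x$ on the half-cylinder $\Omega_*$, and $C^1$ regularity by treating the quadratic part as a bounded quadratic form and controlling the quartic term and its G\^ateaux derivative through H\"older estimates in $L^4_w$, which is exactly the role of the $L^4_w$ component of $\norm{\cdot}_{\mathbf{E}}$. Since the paper gives no proof to compare against, there is nothing further to reconcile; your write-up is the expected one.
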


The proof is standard. We omit it. 

\begin{proposition}
	\label{prop:nobdry}
	Suppose that $u \in \mathbf{E}_{*}$ is a critical point of $I_c$,
	with $\norm{u}_{\infty} < \infty$. Then $u \in
	C^{2,\alpha}(\Omega_{*}) \cap C^{1,\alpha}(\bar{\Omega}_{*})$ and
	satisfies \eqref{LS}. Moreover $\norm{u}_{C^{1,\alpha}((-\infty,0]
	\times \bar \Omega_y)}$ is bounded; in particular, $u_x$ and
	$\nabla_y u$ are uniformly continuous in $\Omega$.
\end{proposition}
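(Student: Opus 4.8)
The plan is to establish elliptic regularity and uniform bounds for a bounded critical point $u$ of $I_c$ on the half-cylinder $\Omega_* = (-\infty,0)\times\Omega_y$. The starting observation is that the weight $e^x$ is strictly positive and smooth, so the weighted Euler--Lagrange equation is equivalent to the (unweighted) uniformly elliptic equation $c^2(u_{xx}+u_x)+\Delta_y u + u(1-u^2)=0$ appearing in \eqref{LS}. Indeed, formally integrating against a test function $\varphi\in C_0^\infty(\Omega_*)$ and using $\partial_x(e^x u_x)=e^x(u_{xx}+u_x)$ shows that the weight only contributes the lower-order transport term $u_x$, which is harmless for regularity. So the first step is to record that $u$ is a weak solution of a linear-looking equation $c^2 u_{xx}+c^2 u_x+\Delta_y u = -u(1-u^2)=:h$, where, crucially, the right-hand side $h$ lies in $L^\infty(\Omega_*)$ because $\norm{u}_\infty<\infty$ is assumed.

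The second step is interior and boundary elliptic estimates carried out on a translation-invariant family of subdomains. Since the coefficients $c^2,1$ of the principal part are constant and the lower-order and forcing terms are bounded independently of the $x$-location, I would cover $\Omega_*$ by overlapping cylinders $Q_k=(k-1,k+1)\times\Omega_y$ (with $k\le 0$, truncated at $x=0$) and apply $L^p$ elliptic estimates (Agmon--Douglis--Nirenberg / Gilbarg--Trudinger \cite{Gilbarg_Trudinger_1983}) on each $Q_k$ up to the lateral boundary $\partial\Omega_y$, where $u=0$ in the trace sense and $\partial\Omega_y$ is $C^{2,\alpha}$. Because the estimate constants are uniform in $k$ (translation invariance in $x$) and $\norm{u}_{L^p(Q_k)}$ is controlled via $\norm{u}_\infty$ together with the finite measure of $\Omega_y$, a bootstrap gives $u\in W^{2,p}_{\mathrm{loc}}$ uniformly, hence by Sobolev embedding $u\in C^{1,\beta}$ for some $\beta$, whereupon $h=u(1-u^2)$ is itself $C^{1,\beta}$ and Schauder estimates upgrade $u$ to $C^{2,\alpha}(\Omega_*)\cap C^{1,\alpha}(\bar\Omega_*)$ with a bound $\norm{u}_{C^{1,\alpha}((-\infty,0]\times\bar\Omega_y)}$ depending only on $c$, $\Omega_y$, and $\norm{u}_\infty$. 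The uniform continuity of $u_x$ and $\nabla_y u$ on $\Omega$ is then immediate from the uniform $C^{1,\alpha}$ bound (via Arzel\`a--Ascoli-type equicontinuity).

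The step I expect to be the main obstacle is making the regularity \emph{uniform up to} $x=-\infty$, i.e. obtaining a single constant in $\norm{u}_{C^{1,\alpha}}$ valid on the entire unbounded cylinder rather than merely locally. The issue is that standard elliptic estimates are local and their constants could a priori degenerate; the remedy is precisely the translation invariance of the operator in the $x$-direction, which guarantees that the Schauder/$L^p$ constants on each $Q_k$ are all equal, so that the local bounds paste into a global one. A secondary technical point is justifying the regularity \emph{at} the right endpoint boundary $\{0\}\times\Omega_y$: here there is no boundary condition imposed (the Dirichlet condition in \eqref{LS} is only on $\partial\Omega_*$ understood as the lateral boundary plus possibly the behavior built into $\mathbf{E}_*$), so I would treat $x=0$ as an interior direction by extending slightly past it using the equation, or simply note that membership in $\mathbf{E}_*$ encodes the correct trace so the same covering argument applies. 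Once these two points are handled, concatenating the local estimates yields the stated global bound, and with $h\in C^{0,\alpha}$ the Schauder theory closes the argument.
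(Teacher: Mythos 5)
Your argument is correct and follows essentially the same route as the paper's proof, which simply observes that boundedness of $u$ together with local boundedness of the weight $e^x$ gives $u\in H^{2}_{\mathrm{loc}}(\Omega_*)$ and then invokes standard elliptic regularity from \cite{Gilbarg_Trudinger_1983}; your version is in fact more explicit than the paper about why the $C^{1,\alpha}$ bound is uniform in $x$, namely via the translation-invariant covering by cylinders $Q_k$ with identical estimate constants. One small correction: since $\mathbf{E}_*$ is defined as the closure of $C_0^\infty(\Omega_*)$, the Dirichlet condition in \eqref{LS} is also imposed on the cap $\{0\}\times\Omega_y$, so at $x=0$ you should apply boundary rather than interior regularity --- which only simplifies that step.
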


\begin{proof}
	A critical point $u$ satisfies
	\begin{equation*}
		0 = I'_{c}[u]\phi = \int_{-\infty}^{0} \int_{\Omega_y}
		\left(c^2 u_x \phi_{x} + \nabla_y u \cdot \nabla_{y}\phi -
		u\phi +u^{3}\phi \right)e^x \, dxdy
	\end{equation*}
	for $\phi \in \mathbf{E}_{*}^{\dag}$ ({the dual of
	$\mathbf{E}_{*}$), in particular for all $\phi \in
	C_{0}^{\infty}(\Omega_{*})$}. Since $u$ is bounded by assumption
	and $e^{x}$ is bounded on bounded subsets of $\Omega_{*}$, it
	immediately follows that $u \in H^{2}_{\text{loc}}(\Omega_{*})$.
	Then standard regularity theory \cite{Gilbarg_Trudinger_1983}
	shows that $u \in C^{2,\alpha}(\Omega_*) \cap
	C^{1,\alpha}(\bar{\Omega}_*)$, and thus it is a classical solution
	of \eqref{LS}.
\end{proof}

\begin{lemma}
	\label{bound}
	If $u \in \mathbf{E}_{*}$ and $I_c[u] \leq 0$ then
	\begin{align}
		\int_{\Omega_*}e^xu^4\, dxdy \leq
		4\int_{\Omega_*}e^x\, dxdy, \label{h6} \\
		\int_{\Omega_*}e^xu^2\, dxdy \leq
		2\int_{\Omega_*}e^x\, dxdy,\label{h3} \\
		\int_{\Omega_*}e^xu_x^2\, dxdy \leq
		\frac{2}{c^2}\int_{\Omega_*}e^x\, dxdy
		\label{h4} 
	\end{align}
	and
	\begin{align}
		\int_{\Omega_*}e^x\vert\nabla_y u\vert^2\,
		dxdy \leq 2\int_{\Omega_*}e^x\,
		dxdy.\label{h5}
	\end{align}
	In particular 
	\begin{equation}
		\label{eq:L4estimate}
		\norm{u}_{L^{4}_{w}} \leq \sqrt{2}\abs{\Omega_{y}}^{1/4}
		\qquad \text{for all } u \in \mathbf{E}_{*} \text{ such that }
		I_{c}[u] \leq 0.
	\end{equation}
\end{lemma}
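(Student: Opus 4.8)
The plan is to read off from the hypothesis $I_c[u]\le 0$ a single scalar inequality relating the four weighted integrals in the statement, and then to close the system by one application of the Cauchy--Schwarz inequality in the weighted measure. Writing $A=\int_{\Omega_*}e^xu_x^2$, $B=\int_{\Omega_*}e^x\abs{\nabla_y u}^2$, $C=\int_{\Omega_*}e^xu^2$, $D=\int_{\Omega_*}e^xu^4$ and $V=\int_{\Omega_*}e^x\,dxdy$, the condition $I_c[u]\le0$ reads
\begin{equation*}
	\tfrac{c^2}{2}A+\tfrac12 B+\tfrac14 D\le\tfrac12 C,
\end{equation*}
and the four claimed bounds are exactly $D\le 4V$, $C\le 2V$, $A\le\frac{2}{c^2}V$ and $B\le 2V$.

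The key observation is that $C$ can be controlled by $D$ and $V$ \emph{independently} of the hypothesis: splitting $e^x u^2=e^{x/2}\cdot(e^{x/2}u^2)$ and applying Cauchy--Schwarz gives $C\le V^{1/2}D^{1/2}$, that is $C^2\le VD$. First I would discard the nonnegative terms $\frac{c^2}{2}A$ and $\frac12 B$ from the displayed inequality to obtain $\frac14 D\le\frac12 C\le\frac12\sqrt{VD}$; dividing by $\sqrt D$ (the case $D=0$ being trivial) yields $D\le 4V$, which is \eqref{h6}. Feeding this back into $C\le\sqrt{VD}$ immediately gives $C\le 2V$, which is \eqref{h3}. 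The remaining two bounds then follow by retaining the appropriate term in the displayed inequality: from $\frac{c^2}{2}A\le\frac12 C\le V$ one gets \eqref{h4}, and from $\frac12 B\le\frac12 C\le V$ one gets \eqref{h5}.

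For the final estimate I would note that, since $u\in\mathbf{E}_*$ is supported in $\Omega_*$, the full-cylinder norm reduces to the half-cylinder integral, $\norm{u}_{L^4_w}^4=\int_\Omega e^x\abs{u}^4=D$. A direct computation gives $V=\int_{-\infty}^0 e^x\,dx\cdot\abs{\Omega_y}=\abs{\Omega_y}$, so \eqref{h6} becomes $\norm{u}_{L^4_w}^4\le 4\abs{\Omega_y}$, and taking fourth roots yields \eqref{eq:L4estimate}.

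I do not expect any genuine obstacle here: the entire argument is algebraic once the interpolation $C^2\le VD$ is in place, and that single Cauchy--Schwarz step is the only nonroutine idea. The only points needing a little care are keeping the constants straight when passing from $\frac14 D\le\frac12\sqrt{VD}$ to $D\le 4V$, and observing that the support of $u$ lets one identify the $\Omega$-integral defining $\norm{u}_{L^4_w}$ with the $\Omega_*$-integral $D$.
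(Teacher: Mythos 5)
Your proposal is correct and follows the paper's proof essentially verbatim: the paper also combines the sign condition $I_c[u]\le 0$ with the weighted Cauchy--Schwarz (Hölder) bound $C\le\sqrt{VD}$ to get \eqref{h6}, substitutes back to get \eqref{h3}, and then reads off \eqref{h4} and \eqref{h5} from the remaining terms. The constants and the identification $V=\abs{\Omega_y}$ for \eqref{eq:L4estimate} all check out.
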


\begin{proof}
	By Holder inequality
	\begin{align}
		\label{Ho}
		\int_{\Omega_*}e^xu^2\, dxdy \leq
		\left(\int_{\Omega_*}e^x\,
		dxdy\right)^{\frac{1}{2}}
		\left(\int_{\Omega_*}e^xu^4\,
		dxdy\right)^{\frac{1}{2}}.
	\end{align}
	Clearly $I_c[u] \leq 0$ implies that
	\begin{align}
		\label{<0}
		\int_{\Omega_*}\left(\frac{c^2}{2}u_x^2\,
		dxdy+ \frac{1}{2}\vert\nabla_y
		u\vert^2+\frac{1}{4}u^4\right)e^x\, dxdy \notag \\
		\leq \int_{\Omega_*}\frac{1}{2}e^xu^2\,
		dxdy.
	\end{align} 
	This together with \eqref{Ho} yields \eqref{h6}. Substituting
	\eqref{h6} into \eqref{Ho} gives \eqref{h3}. Then \eqref{h4} and
	\eqref{h5} easily follow from \eqref{<0}.
\end{proof}

\begin{lemma}
	\label{lem:Lpconvergence}
	Assume that $u_{n} \in \mathbf{E}_{*}$ is a sequence such that
	$I_{c}[u_{n}] \leq 0$ and $\norm{u_{n}}_{L^{6}_{w}(\Omega_{*})}
	\leq C$.
	
	Then there exists a subsequence $u_{n_{k}}$ which converges weakly
	in $\mathbf{E}_{*}$ and strongly in $L^{p}(\Omega_{*})$ for all $p
	\in [2, 4]$ to a function $\bar{u} \in \mathbf{E}_{*}$.
\end{lemma}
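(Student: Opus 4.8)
The plan is to combine the a priori bounds of Lemma~\ref{bound} with local Rellich compactness and the extra $L^6_w$ control of the tail at $x\to-\infty$. First I would note that, since $I_c[u_n]\le0$ and $\int_{\Omega_*}e^x\,dxdy=\abs{\Omega_y}<\infty$, the estimates \eqref{h3}--\eqref{h5} and \eqref{eq:L4estimate} make $\{u_n\}$ bounded in $\mathbf{E}_*$. Each factor $L^2_w,L^4_w$ is reflexive (the map $u\mapsto e^{x/p}u$ is an isometry onto the Lebesgue space $L^p(\Omega_*)$), so $\mathbf{E}_*$, being isometric to a closed subspace of $L^2_w\times L^4_w\times L^2_w\times L^2_w$ with the $\ell^2$ product norm, is reflexive. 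Hence a subsequence (not relabeled) satisfies $u_{n_k}\weakly\bar u$ in $\mathbf{E}_*$ with $\bar u\in\mathbf{E}_*$; concretely one extracts weak limits of $u_{n_k},(u_{n_k})_x,\nabla_y u_{n_k}$ in the reflexive factors and identifies the last two with $\bar u_x,\nabla_y\bar u$ by testing against $C_0^\infty(\Omega_*)$. Extracting also a weak limit in the reflexive space $L^6_w$, necessarily equal to $\bar u$, weak lower semicontinuity of the norm yields $\norm{\bar u}_{L^6_w}\le C$.

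Next I would recover strong convergence on bounded truncations. For $R>0$ put $Q_R:=(-R,0)\times\Omega_y$; there $e^{-R}\le e^x\le1$, so the $\mathbf{E}_*$ bound gives a uniform $H^1(Q_R)$ bound and Rellich--Kondrachov yields strong convergence in $L^2(Q_R)$ along a subsequence. A diagonal extraction over $R=1,2,\dots$ produces one subsequence converging strongly in $L^2$ on every $Q_R$. Interpolating against the uniform bound in $L^6(Q_R)$ (which follows from $\norm{u_{n_k}}_{L^6_w}\le C$ and $e^x\ge e^{-R}$ on $Q_R$),
\[
\norm{u_{n_k}-\bar u}_{L^p(Q_R)}\le\norm{u_{n_k}-\bar u}_{L^2(Q_R)}^{1-\theta}\,\norm{u_{n_k}-\bar u}_{L^6(Q_R)}^{\theta}\longrightarrow0
\]
for every $p\in[2,6)$, in particular for $p\in[2,4]$. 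This is precisely where the $L^6_w$ hypothesis is used: it lets us reach $p=4$ without any dimensional restriction, covering the case where $H^1(Q_R)\hookrightarrow L^4(Q_R)$ fails to be compact.

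Finally I would control the tail $T_R:=(-\infty,-R)\times\Omega_y$ using the weight decay together with the $L^6_w$ bound. Hölder with exponents $6/p$ and $6/(6-p)$ gives
\[
\int_{T_R}e^x\abs{u_{n_k}}^p\,dxdy\le\Bigl(\int_{T_R}e^x\abs{u_{n_k}}^6\,dxdy\Bigr)^{p/6}\Bigl(\int_{T_R}e^x\,dxdy\Bigr)^{(6-p)/6}\le C^p\bigl(\abs{\Omega_y}e^{-R}\bigr)^{(6-p)/6},
\]
which tends to $0$ as $R\to\infty$ uniformly in $k$, and the same bound holds for $\bar u$. Given $\varepsilon>0$, choosing $R$ large makes the weighted $L^p$ mass of $u_{n_k}-\bar u$ on $T_R$ below $\varepsilon$ uniformly in $k$, while the previous step makes its mass on $Q_R$ below $\varepsilon$ for $k$ large; summing the two shows $u_{n_k}\to\bar u$ strongly in $L^p_w(\Omega_*)$ for every $p\in[2,4]$.

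The only genuinely delicate point is the loss of compactness on the unbounded cylinder as $x\to-\infty$: Rellich yields merely local strong convergence, and the global conclusion rests on the uniform tail smallness above, which is exactly what the uniform $L^6_w$ bound buys. I therefore expect the main work to be the careful splitting into $Q_R$ and $T_R$ and the correct identification of the weak limit (together with the verification $\bar u\in\mathbf{E}_*$), whereas the Hölder and interpolation steps are routine.
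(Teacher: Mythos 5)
Your proposal is correct and follows essentially the same route as the paper: extract a weak limit from the $\mathbf{E}_*$ bound of Lemma~\ref{bound}, split $\Omega_*$ into a bounded piece (local compactness) and a tail where H\"older against the exponentially decaying weight gives smallness uniformly in $k$, and use the uniform $L^6_w$ bound to interpolate up to $L^4_w$. The only cosmetic difference is that the paper first proves $L^2_w$ convergence in full and then interpolates globally ($\norm{u_{n_k}-\bar u}_{L^4_w}^4\le C\norm{u_{n_k}-\bar u}_{L^2_w}$), whereas you interpolate on each truncation and estimate the tail separately for each $p$; both are sound.
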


\begin{proof}
	It immediately follows from Lemma \ref{bound} that $u_{n}$ is
	bounded in $\mathbf{E}_{*}$. From the boundedness of $u_{n}$,
	there exists a subsequence $u_{n_{k}}$ which converges weakly to
	some $\bar{u} \in \mathbf{E}_{*}$ and strongly in
	$L^{\infty}([-L,0] \times \Omega_{y})$ for all $L > 0$.
	
	We next show that $u_{n_{k}} \to \bar{u}$ in
	$L^{p}_{w}(\Omega_{*})$ if $p \in [2,4]$.
	Let us first remark that Lemma \ref{bound} implies that
	\begin{equation*}
		\int_{\Omega_{*}} \esp^{x} \abs{u_{n_{k}}}^{p} \, dx dy \leq C
	\end{equation*}	
	for $2 \leq p \leq 4$, and the same inequality holds for
	$\bar{u}$, with a constant $C$ not depending on $k$.
	
	Given $\epsilon > 0$, since
	\begin{align*}
		&\int_{-\infty}^{-L} \int_{\Omega_{y}} \esp^{x} \abs{u_{n_{k}}
		- \bar{u}}^{2} \, dydx \\
		&\qquad \leq \left( \int_{-\infty}^{-L} \int_{\Omega_{y}}
		\esp^{x} \, dydx \right)^{1/2} \left(\int_{-\infty}^{-L}
		\int_{\Omega_{y}} \esp^{x} \abs{u_{n_{k}} - \bar{u}}^{4} \,
		dydx \right)^{1/2} \\
		&\qquad \leq \abs{\Omega_{y}}^{1/2} e^{-L/2}
		\left(\int_{-\infty}^{0} \int_{\Omega_{y}} \esp^{x}
		\abs{u_{n_{k}} - \bar{u}}^{4} \, dydx \right)^{1/2} \\
		&\qquad \leq \tilde{C} e^{-L/2},
	\end{align*}
	we take $L > 0$ such that 
	\begin{equation*}
		\int_{-\infty}^{-L} \int_{\Omega_{y}} \esp^{x} \abs{u_{n_{k}}
		- \bar{u}}^{2} \, dydx \leq \tilde{C} e^{-L/2} <
		\frac{\epsilon^{2}}{2}
	\end{equation*}
	and then $k_{0} \in \mathbb{N}$ such that
	\begin{equation*}
		\int_{-L}^{0} \int_{\Omega_{y}} \esp^{x} \abs{u_{n_{k}} -
		\bar{u}}^{2} \, dydx \leq \frac{\epsilon^{2}}{2} \qquad
		\text{for all } k \geq k_{0}.
	\end{equation*}
	Then for all $k \geq k_{0}$
	\begin{multline*}
		\norm{u_{n_{k}} - \bar{u}}_{L^{2}_{w}(\Omega_{*})}^{2} =
		\int_{-\infty}^{0} \int_{\Omega_{y}} \esp^{x} \abs{u_{n_{k}} -
		\bar{u}}^{2} \, dydx \\
		= \int_{-\infty}^{-L} \int_{\Omega_{y}} \esp^{x}
		\abs{u_{n_{k}} - \bar{u}}^{2} \, dydx + \int_{-L}^{0}
		\int_{\Omega_{y}} \esp^{x} \abs{u_{n_{k}} - \bar{u}}^{2} \,
		dydx < \epsilon^{2}
	\end{multline*}
	and $\norm{u_{n_{k}} - \bar{u}}_{L^{2}_{w}(\Omega_{*})}^{2} \to
	0$.
	
	Observe that
	\begin{multline*}
		\norm{u_{n_{k}} - \bar{u}}_{L^{4}_{w}(\Omega_{*})}^{4} =
		\int_{-\infty}^{0} \int_{\Omega_{y}} \esp^{x} \abs{u_{n_{k}} -
		\bar{u}}^{4} \, dydx \\
		\leq \left( \int_{\Omega_{*}} \esp^{x} \abs{u_{n_{k}} -
		\bar{u}}^{2} \, dydx \right)^{1/2} \left( \int_{\Omega_{*}}
		\esp^{x} \abs{u_{n_{k}} - \bar{u}}^{6} \, dydx \right)^{1/2} 
		\\
		\leq C \norm{u_{n_{k}} - \bar{u}}_{L^{2}_{w}(\Omega_{*})}
	\end{multline*}
	by the boundedness of $u_{n_{k}}$ in $L^{6}_{w}(\Omega_{*})$. Now
	the lemma follows.
\end{proof}

\begin{lemma}
	\label{lem:PS}
	Assume that $u_{n} \in \mathbf{E}_{*}$ is a sequence such that
	$\norm{u_{n}}_{L^{6}_{w}} \leq C$, $I_{c}[u_{n}] \to b \leq 0$ and
	$I'_{c}[u_{n}] \to 0$.

	Then there exists a subsequence $u_{n_{k}}$ such that $u_{n_{k}}
	\to \bar{u} \in \mathbf{E}_{*}$, $I_{c}[\bar{u}] = b$ and
	$I'_{c}[\bar{u}] = 0$.
\end{lemma}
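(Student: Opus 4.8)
The statement is a Palais--Smale type compactness result, and the plan is to prove it in two stages: first extract a subsequence converging weakly in $\mathbf{E}_*$ and strongly in the lower order norms, and then upgrade the weak convergence of the gradients $u_x$ and $\nabla_y u$ in $L^2_w$ to strong convergence, exploiting that $I_c'[u_n]\to 0$ together with the coercivity of the quadratic part of $I_c$.

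For the first stage I would argue as follows. Since $I_c[u_n]\to b\leq 0$, after discarding finitely many terms we may assume $I_c[u_n]\leq\delta$ for an arbitrarily small fixed $\delta>0$; carrying the extra term $\delta$ through the computation in the proof of Lemma~\ref{bound} shows that the bounds of Lemma~\ref{bound} still hold, with right-hand sides enlarged by a $\delta$-dependent constant, so $u_n$ is bounded in $\mathbf{E}_*$. Together with the hypothesis $\norm{u_n}_{L^6_w}\leq C$, the argument of Lemma~\ref{lem:Lpconvergence} then applies and yields a subsequence, still denoted $u_{n_k}$, with $u_{n_k}\weakly\bar u$ in $\mathbf{E}_*$ for some $\bar u\in\mathbf{E}_*$ and $u_{n_k}\to\bar u$ strongly in $L^p_w(\Omega_*)$ for every $p\in[2,4]$. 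In particular $(u_{n_k})_x\weakly\bar u_x$ and $\nabla_y u_{n_k}\weakly\nabla_y\bar u$ weakly in $L^2_w(\Omega_*)$, since $u\mapsto u_x$ and $u\mapsto\nabla_y u$ are bounded linear maps from $\mathbf{E}_*$ into $L^2_w$.

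The heart of the proof is the second stage. I would test the derivative at $u_{n_k}$ against the admissible increment $u_{n_k}-\bar u\in\mathbf{E}_*$, writing
\begin{multline*}
	I_c'[u_{n_k}](u_{n_k}-\bar u)
	= \int_{\Omega_*}\Bigl(c^2 (u_{n_k})_x\bigl((u_{n_k})_x-\bar u_x\bigr)
	+ \nabla_y u_{n_k}\cdot\bigl(\nabla_y u_{n_k}-\nabla_y\bar u\bigr)\Bigr)e^x\,dxdy \\
	+ \int_{\Omega_*}\bigl(-u_{n_k}+u_{n_k}^3\bigr)\bigl(u_{n_k}-\bar u\bigr)e^x\,dxdy .
\end{multline*}
The left-hand side tends to $0$, since $\norm{u_{n_k}-\bar u}_{\mathbf{E}}$ is bounded and $I_c'[u_{n_k}]\to 0$ in the dual of $\mathbf{E}_*$. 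The second integral on the right also tends to $0$: the term $\int_{\Omega_*} u_{n_k}(u_{n_k}-\bar u)e^x\,dxdy$ vanishes in the limit because $u_{n_k}$ is bounded in $L^2_w$ and $u_{n_k}\to\bar u$ in $L^2_w$, while $\int_{\Omega_*} u_{n_k}^3(u_{n_k}-\bar u)e^x\,dxdy\to 0$ follows from H\"older's inequality, the boundedness of $u_{n_k}^3$ in $L^{4/3}_w$ (a consequence of the $L^4_w$ bound of Lemma~\ref{bound}) and the strong convergence $u_{n_k}\to\bar u$ in $L^4_w$. Hence the first integral tends to $0$ as well.

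Finally I would subtract the quantities produced by weak convergence: testing $(u_{n_k})_x\weakly\bar u_x$ and $\nabla_y u_{n_k}\weakly\nabla_y\bar u$ in $L^2_w$ against the fixed functions $\bar u_x$ and $\nabla_y\bar u$ gives $\int_{\Omega_*} c^2\bar u_x((u_{n_k})_x-\bar u_x)e^x\,dxdy\to0$ and $\int_{\Omega_*}\nabla_y\bar u\cdot(\nabla_y u_{n_k}-\nabla_y\bar u)e^x\,dxdy\to0$. Subtracting these from the first integral above leaves
\begin{equation*}
	c^2\norm{(u_{n_k})_x-\bar u_x}_{L^2_w}^2 + \norm{\nabla_y u_{n_k}-\nabla_y\bar u}_{L^2_w}^2 \to 0 .
\end{equation*}
Since $c>0$, this yields strong convergence of $(u_{n_k})_x$ and $\nabla_y u_{n_k}$ in $L^2_w$; combined with the strong $L^2_w$ and $L^4_w$ convergence from the first stage, it gives $u_{n_k}\to\bar u$ strongly in $\mathbf{E}_*$. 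The continuity of $I_c$ and $I_c'$ on $\mathbf{E}_*$ then forces $I_c[\bar u]=b$ and $I_c'[\bar u]=0$. The main obstacle is precisely this passage from weak to strong convergence of the gradients; it is what requires the full strength of $I_c'[u_{n_k}]\to0$ and the careful handling of the cubic term, and here the factor $c^2>0$ in front of $u_x^2$ is essential, so one expects the argument to degenerate as $c\to0$.
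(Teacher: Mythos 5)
Your proposal is correct and follows essentially the same route as the paper: weak $\mathbf{E}_*$-convergence plus strong $L^p_w$-convergence via Lemma~\ref{lem:Lpconvergence}, then upgrading to strong convergence of the gradients by testing $I_c'[u_{n_k}]$ against $u_{n_k}-\bar u$ and showing the nonlinear and weak-convergence remainders vanish. The only (harmless) variations are your explicit handling of the case $I_c[u_n]\leq\delta$ before invoking the bounds of Lemma~\ref{bound}, and your use of the $L^{4/3}_w$--$L^4_w$ pairing for the cubic term where the paper pairs the $L^6_w$ bound with strong $L^2_w$ convergence.
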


\begin{proof}
	Lemma \ref{lem:Lpconvergence} implies that there is a subsequence
	$u_{n_{k}}$ which converges weakly in $\mathbf{E}_{*}$ and
	strongly in $L^{2}(\Omega_{*})$ to a function $\bar{u} \in
	\mathbf{E}_{*}$. Then
	\begin{align*}
		\int_{\Omega_{*}} &\left[ c^{2} \abs{(u_{n_{k}})_x -
		\bar{u}_{x}}^{2} + \abs{\nabla_{y} u_{n_{k}} - \nabla_{y}
		\bar{u}}^{2} \right] \esp^{x}dxdy \\
		&= \int_{\Omega_{*}} \left[ c^{2} (u_{n_{k}})_x((u_{n_{k}})_x
		- \bar{u}_{x}) + \nabla_{y} u_{n_{k}} (\nabla_{y} u_{n_{k}} -
		\nabla_{y} \bar{u}) \right] \esp^{x}dxdy \\
		&\qquad - \int_{\Omega_{*}} \left[
		c^{2} \bar{u}_{x}((u_{n_{k}})_x - \bar{u}_{x}) + \nabla_{y}
		\bar{u} (\nabla_{y} u_{n_{k}} - \nabla_{y} \bar{u}) \right]
		\esp^{x}dxdy \\
		&= \langle I'_{c}[u_{n_{k}}], u_{n_{k}} - \bar{u} \rangle +
		\int_{\Omega_{*}}  \left[ u_{n_{k}}(u_{n_{k}} - \bar{u}) -
		u^{3}_{n_{k}}(u_{n_{k}} - \bar{u}) \right] \esp^{x}dxdy \\
		&\qquad - \int_{\Omega_{*}} \left[
		c^{2} \bar{u}_{x}((u_{n_{k}})_x - \bar{u}_{x}) + \nabla_{y}
		\bar{u} (\nabla_{y} u_{n_{k}} - \nabla_{y} \bar{u}) \right]
		\esp^{x}dxdy,
	\end{align*}
	which converges to zero since $u_{n_{k}} - \bar{u}$ is bounded,
	converges weakly in $\textbf{E}_{*}$, strongly in
	$L^{2}_{w}(\Omega_{*})$ to $0$ while $u_{n_{k}}$ is bounded in
	$L^{6}_{w}(\Omega_{*})$. This immediately deduces that
	$I'_{c}[\bar{u}] = 0$ and $I_{c}[\bar{u}] = b$.
\end{proof}

\begin{lemma}
	\label{lem:gradLip}
	There exists $L_* > 0$ such that
	\begin{equation*}
		\norm{I_{c}'[u] - I_{c}'[v]}_{\mathbf{E}_{*}^{\dag}} \leq L_*
		\norm{u - v}_{\mathbf{E}_{*}}
	\end{equation*}
	if $u$, $v \in \mathbf{E}_{*}$, $I_{c}[u] \leq 0$
	and $I_{c}[v] \leq 0$.
\end{lemma}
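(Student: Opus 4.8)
The plan is to estimate the operator norm of $I_c'[u] - I_c'[v]$ directly from its integral representation. For any $\phi \in \mathbf{E}_*$ one has
\[
	\langle I_c'[u] - I_c'[v], \phi\rangle = \int_{\Omega_*}\Bigl(c^2(u_x - v_x)\phi_x + (\nabla_y u - \nabla_y v)\cdot\nabla_y\phi - (u-v)\phi + (u^3 - v^3)\phi\Bigr)e^x\,dx\,dy,
\]
and the task is to bound the right-hand side by a fixed multiple of $\norm{u-v}_{\mathbf{E}_*}\norm{\phi}_{\mathbf{E}_*}$, uniformly over the sublevel set $\settc{w \in \mathbf{E}_*}{I_c[w] \le 0}$. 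I would treat the three ``linear'' terms and the cubic term separately, working throughout with the weighted measure $e^x\,dx\,dy$ and recalling that every $L^2_w$ and $L^4_w$ norm of a derivative or of the function itself is one of the summands of $\norm{\cdot}_{\mathbf{E}_*}$.

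For the first three terms the Cauchy--Schwarz inequality in $L^2_w$ gives at once
\[
	c^2\abs{\int_{\Omega_*} (u_x-v_x)\phi_x\,e^x} \le c^2\norm{u_x-v_x}_{L^2_w}\norm{\phi_x}_{L^2_w} \le c^2\norm{u-v}_{\mathbf{E}_*}\norm{\phi}_{\mathbf{E}_*},
\]
and analogously for the $\nabla_y$ pairing and the $L^2_w$ pairing, each of the latter two bounded by $\norm{u-v}_{\mathbf{E}_*}\norm{\phi}_{\mathbf{E}_*}$. Thus the three linear terms together contribute a factor $(c^2+2)$, independent of $u$ and $v$.

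The essential point is the cubic term, and this is exactly where the sublevel hypothesis and the choice of $\mathbf{E}_*$ (rather than $\mathbf{H}_0$) enter. Factoring $u^3 - v^3 = (u-v)(u^2 + uv + v^2)$ I would estimate
\[
	\abs{\int_{\Omega_*} (u^3-v^3)\phi\,e^x} \le \int_{\Omega_*} \abs{u-v}\,\bigl(u^2 + \abs{uv} + v^2\bigr)\,\abs{\phi}\,e^x
\]
and apply the generalized H\"older inequality to each of the three resulting products of four factors, splitting the weight as $e^x = (e^{x/4})^4$ and placing each factor in $L^4_w$. This yields a bound by $\norm{u-v}_{L^4_w}\norm{\phi}_{L^4_w}\bigl(\norm{u}_{L^4_w}^2 + \norm{u}_{L^4_w}\norm{v}_{L^4_w} + \norm{v}_{L^4_w}^2\bigr)$. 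Since $I_c[u]\le 0$ and $I_c[v]\le 0$, estimate \eqref{eq:L4estimate} of Lemma~\ref{bound} gives the uniform bound $\norm{u}_{L^4_w},\norm{v}_{L^4_w}\le\sqrt{2}\,\abs{\Omega_y}^{1/4}$, so the quadratic bracket is at most $6\abs{\Omega_y}^{1/2}$ and the cubic term is controlled by $6\abs{\Omega_y}^{1/2}\norm{u-v}_{L^4_w}\norm{\phi}_{L^4_w}\le 6\abs{\Omega_y}^{1/2}\norm{u-v}_{\mathbf{E}_*}\norm{\phi}_{\mathbf{E}_*}$.

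Collecting the four estimates and taking the supremum over $\norm{\phi}_{\mathbf{E}_*}\le 1$ yields the claim with, for instance, $L_* = c^2 + 2 + 6\abs{\Omega_y}^{1/2}$. The main obstacle is precisely the cubic term: without the uniform $L^4_w$ control furnished by the sublevel condition $I_c \le 0$ the Lipschitz constant could not be taken uniform, and it is for this reason that one works in $\mathbf{E}_*$, whose norm already contains the $L^4_w$ piece needed to absorb the factors $u^2+uv+v^2$.
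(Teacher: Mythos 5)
Your argument is correct and follows essentially the same route as the paper: Cauchy--Schwarz on the three linear terms, the factorization $u^{3}-v^{3}=(u-v)(u^{2}+uv+v^{2})$ with H\"older in $L^{4}_{w}$ for the cubic term, and the uniform bound \eqref{eq:L4estimate} from Lemma~\ref{bound} to make the Lipschitz constant independent of $u$ and $v$. Your version merely makes explicit the linear-term estimates and a concrete value of $L_*$, which the paper leaves implicit.
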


\begin{proof}
	Suppose that $u$, $v \in \mathbf{E}_{*}$ and $I_{c}[u] \leq 0$,
	$I_{c}[v] \leq 0$. Set $h = u - v$, then for all $\phi \in
	\mathbf{E}_{*}$ 
	\begin{multline*}
		(I_{c}'[u] - I_{c}'[v])[\phi] \\
		= \int_{\Omega_{*}} \left(c^2 h_{x} \phi_{x} + \nabla_{y} h
		\cdot \nabla_{y}\phi - h \phi + (u^{3} - v^{3}) \phi \right)
		e^x \, dxdy.
	\end{multline*}
	By \eqref{eq:L4estimate}
	\begin{align*}
		&\int_{\Omega_{*}} \left( (u^{3} - v^{3}) \phi \right) e^x \,
		dxdy \\
		&\qquad = \int_{\Omega_{*}} \left( (u - v) (u^{2} + uv +
		v^{2}) \phi \right) e^x \, dxdy \\
		&\qquad \leq \norm{u - v}_{L^{4}_{w}}(\norm{u}^{2}_{L^{4}_{w}}
		+ \norm{u}_{L^{4}_{w}} \norm{v}_{L^{4}_{w}} +
		\norm{v}^{2}_{L^{4}_{w}}) \norm{\phi}_{L^{4}_{w}} \\
		&\qquad \leq C \norm{u - v}_{L^{4}_{w}}
		\norm{\phi}_{L^{4}_{w}},
	\end{align*}
	with the constant $C$ not depending on $u$ or $v$. Thus the proof
	is complete.
\end{proof}

We now introduce the min-max classes:
\begin{equation*}
	\Gamma_{k} = \settc{ A \subset \textbf{E}_{*} \setminus \{0\} }{A
	\text{ closed, } A = - A \text{ and } \gamma(A) \geq k}
\end{equation*}
where $\gamma(A)$ is the Krasnoselski genus, and
\begin{equation*}
	\hat{\Gamma}_{k} = \settc{A \in \Gamma_{k}}{\abs{u(x,y)} \leq 1,
	\text{ for all } u \in A \text{ and } (x,y) \in \Omega_{*}}.
\end{equation*}
For fixed $c$, we denote $I^{a} = \settc{u \in \mathbf{E}_{*}}{I_{c}[u] <
a}$, the sublevels of $I_{c}$. 
\begin{proposition}
	For all $k \in \mathbb{N}$, there is a set $A$ of genus $k$ such
	that
	\begin{equation}
		\label{eq:ckequalcbark}
		c_{k} = \inf_{A \in \Gamma_{k}} \sup_{u \in A} I_{c}[u] =
		\inf_{\hat{A} \in \hat{\Gamma}_{k}} \sup_{u \in \hat{A}}
		I_{c}[u] < 0
	\end{equation}
\end{proposition}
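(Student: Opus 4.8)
The plan is to split the chain into two independent parts: the equality of the two min-max values, and the strict inequality $c_k<0$ (the latter also exhibits a competitor of genus $k$, ensuring $\Gamma_k$ and $\hat{\Gamma}_{k}$ are nonempty so that the infima make sense). Since $\hat{\Gamma}_{k}\subseteq\Gamma_k$, the bound $c_k\le\inf_{\hat A\in\hat{\Gamma}_{k}}\sup_{\hat A}I_{c}$ is immediate, so the real work is to manufacture, from an arbitrary $A\in\Gamma_k$, a competitor $\hat A\in\hat{\Gamma}_{k}$ with $\sup_{\hat A}I_{c}\le\sup_A I_{c}$. The natural device is the odd truncation $Tu:=\max(-1,\min(1,u))$. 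I would first record that $T$ maps $\mathbf{E}_{*}$ continuously into itself and is odd, and—crucially—that $I_{c}[Tu]\le I_{c}[u]$ for every $u$: where $\abs{u}\le1$ nothing changes, while on $\{\abs{u}>1\}$ truncation annihilates $u_x$ and $\nabla_y u$ and replaces $F(u)$ by $F(\pm1)=\min F$, so each term of \eqref{gradE} can only decrease.

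Given $A\in\Gamma_k$ I may assume $\sup_A I_{c}<0$ (sets with $\sup_A I_{c}\ge0$ are irrelevant to the infimum once $c_k<0$ is known, see below). Set $\hat A:=\overline{T(A)}$. This set is symmetric because $T$ is odd, it is closed by construction, and its elements satisfy $\abs{u}\le1$ a.e.\ (the bound passes to $L^{4}_{w}$-limits). Monotonicity of the Krasnoselski genus under the odd continuous map $T$ gives $\gamma(\overline{T(A)})\ge\gamma(A)\ge k$. The one delicate point is that $0\notin\hat A$: if $Tu_n\to0$ in $\mathbf{E}_{*}$ with $u_n\in A$, then continuity of $I_{c}$ forces $I_{c}[Tu_n]\to I_{c}[0]=0$, contradicting $I_{c}[Tu_n]\le I_{c}[u_n]\le\sup_A I_{c}<0$. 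Hence $\hat A\in\hat{\Gamma}_{k}$, and since $I_{c}$ is continuous, $\sup_{\hat A}I_{c}=\sup_{T(A)}I_{c}\le\sup_A I_{c}$. Taking the infimum over $A\in\Gamma_k$ yields the reverse inequality, so the two values coincide.

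It remains to prove $c_k<0$. I would exhibit a $k$-dimensional subspace $V\subset\mathbf{E}_{*}$ on which the quadratic part $B[w]:=\frac12\int_{\Omega_{*}}(c^2 w_x^2+\abs{\nabla_y w}^2-w^2)e^{x}\,dxdy$ of $I_{c}$ is negative definite; then for $\rho$ small the sphere $A_\rho:=\{w\in V:\norm{w}_{\mathbf{E}_{*}}=\rho\}$ has genus $k$ and, because $I_{c}[w]=B[w]+\tfrac14\int e^{x}w^4 \le -\delta\rho^2+O(\rho^4)$, satisfies $\sup_{A_\rho}I_{c}<0$ (shrinking $\rho$ further also forces $\abs{w}\le1$, so in fact $A_\rho\in\hat{\Gamma}_{k}$). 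To build $V$ I restrict to separated functions $w=f(x)\psi_1(y)$, with $\psi_1$ the first Dirichlet eigenfunction of $-\Delta_y$, which reduces $B$, up to the positive factor $\tfrac12\norm{\psi_1}^2$, to the one-dimensional weighted form $\int_{-\infty}^0(c^2 f'^2-(1-\lambda_1)f^2)e^{x}\,dx$. Substituting $f=e^{-x/2}g$ with $g(0)=0$ turns this into $\int_{-\infty}^0\bigl(c^2 g'^2+(\tfrac{c^2}{4}-(1-\lambda_1))g^2\bigr)\,dx$, whose potential is strictly negative precisely when $c<2\sqrt{1-\lambda_1}$. Choosing $k$ disjointly supported, widely spread bumps $g_1,\dots,g_k$ (for which $\int g_i'^2\ll\int g_i^2$) makes this form negative on their $k$-dimensional span, giving the desired $V$.

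The main obstacle is the truncation step. Beyond the routine continuity of $T$ on $\mathbf{E}_{*}$ and the term-by-term verification that $I_{c}[Tu]\le I_{c}[u]$, the genuinely delicate point is the interaction between passing to the closure $\overline{T(A)}$ and the genus-plus-exclusion-of-$0$ requirements defining $\hat{\Gamma}_{k}$. It is exactly the strict sign $\sup_A I_{c}<0$, furnished by $c_k<0$, that rules out $0\in\overline{T(A)}$ and lets genus monotonicity survive the closure; without it the construction of $\hat A$ would collapse.
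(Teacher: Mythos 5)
Your proposal is correct and follows essentially the same route as the paper: the same odd truncation at $\pm 1$ to show $c_k=\hat c_k$ (you add some welcome care about closedness of the truncated set and excluding $0$ from its closure, which the paper glosses over), and the same construction of a $k$-dimensional negative subspace for the quadratic form via the substitution $e^{-x/2}$ times the first cross-sectional eigenfunction, exploiting $c<2\sqrt{1-\lambda_1}$ exactly as the paper does with $\phi_k=e^{-x/2}\sin(k\pi x/L)\varphi_+(y)$ (your disjoint bumps play the role of the paper's sines).
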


\begin{proof}
	Take $L > 0$ (to be fixed later) and consider the linear problem
	\begin{equation*}
		\begin{cases}
			-c^{2}(u_{xx} + u_{x}) - \Delta_{y}u - u = \lambda_{1} u &
			\text{in } \Omega_{L} = [-L, 0] \times \Omega_{y}, \\
			u = 0 & \text{on } \partial \Omega_{L},
		\end{cases}
	\end{equation*}
	where $\lambda_{1}$ is the first eigenvalue of \eqref{ev} and
	$\varphi_{+}$ the corresponding eigenfunction. Set
	\begin{equation*}
		\phi_{k}(x,y) = \esp^{-x/2} \sin \left( \frac{k\pi}{L} x
		\right) \varphi_{+}(y)
	\end{equation*}
	Notice that $\phi_{k}(x,y) = 0$ for $(x,y) \in \partial
	\Omega_{L}$ and it is a solution of
	\begin{equation} 
		\label{phyeq}
		-c^{2}(\phi_{xx} + \phi_{x}) - \Delta_{y} \phi - \phi =
		\left[c^{2}\left(\frac{1}{4} + \frac{k^{2}\pi^{2}}{L^{2}}
		\right) + (\lambda_{1} - 1) \right] \phi.
	\end{equation}
	Multiplying \eqref{phyeq} by $\phi \esp^{x}$ and integrating it,
	we obtain
	\begin{align*}
		\int_{-L}^{0} \int_{\Omega_{y}} &\left[ -c^{2}(\phi_{xx} +
		\phi_{x}) - \Delta_{y} \phi - \phi \right] \phi \esp^{x}dydx
		\\
		&= \left[c^{2}\left(\frac{1}{4} + \frac{k^{2}\pi^{2}}{L^{2}}
		\right) + (\lambda_{1} - 1) \right] \int_{-L}^{0}
		\int_{\Omega_{y}} \phi^{2} \esp^{x} \, dydx.
	\end{align*}
	Set
	\begin{align*}
		Q_{L}[\phi] = \frac{1}{2} \int_{-L}^{0} \int_{\Omega_{y}}
		\left[ c^{2} \phi^{2}_{x} + \abs{\nabla_{y} \phi}^{2} -
		\phi^{2} \right] \esp^{x} \, dydx.
	\end{align*}
	Since
	\begin{align*}
		2Q_{L}[\phi] = &\int_{-L}^{0} \int_{\Omega_{y}} \left[ -c^{2}
		\left( (\phi_{x}\phi\esp^{x})_x - \phi^{2}_{x}\esp^{x} \right)
		+ \abs{\nabla_{y} \phi}^{2} \esp^{x} - \phi^{2} \esp^{x}
		\right]dydx \\
		&= \int_{-L}^{0} \int_{\Omega_{y}} \left[ -c^{2}(\phi_{xx} +
		\phi_{x}) - \Delta_{y} \phi - \phi \right] \phi \esp^{x}dydx,
	\end{align*}
	it follows that $Q_{L}[\phi_{k}] < 0$ if
	\begin{equation*}
		c^{2}\left(\frac{1}{4} + \frac{k^{2}\pi^{2}}{L^{2}} \right) +
		(\lambda_{1} - 1) < 0.
	\end{equation*}
	In fact, for every given $k \in \mathbb{N}$, if $L$ is  
	large enough then $Q_{L}[\phi_{k}] < 0$, because $\lambda_{1} <
	1$ and $c < 2\sqrt{(1 -\lambda_{1})}$. 
	Clearly
	\begin{equation*}
		Q_{L}[\alpha_i \phi_{i} + \alpha_j \phi_{j}] = \alpha_i^{2}
		Q_{L}[\phi_{i}] + \alpha_j^{2} Q_{L}[\phi_{j}].
	\end{equation*}
	We now extend the functions $\phi_{i}(x,y)$ to be defined on
	$\Omega_{*}$
	by setting $\phi_{i}(x,y) = 0$ for all $x < -L$.

	Consider an odd map
	\begin{equation*}
		\xi \colon S^{k} \to \textbf{E}_{*}, \qquad \xi(\alpha_{1},
		\alpha_{2}, \ldots, \alpha_{k}) = \epsilon \sum_{i=1}^{k}
		\alpha_{i} \phi_{i}.
	\end{equation*}
	By direct calculation 
	\begin{align*}
		I_{c}[\epsilon\xi(\alpha_{1}, \ldots, \alpha_{k})] &=
		\epsilon^{2} \sum_{i = 1}^{k} \alpha_{i}^{2} Q_{L}[\phi_{i}]
		\\
		&+ \epsilon^{4} \int_{-L}^{0} \int_{\Omega_{y}}
		\frac{\abs{\sum_{i=1}^{k} \alpha_{i} \phi_{i}}^{4}}{4}
		\esp^{x} \, dx dy,
	\end{align*}
	which is negative for all $(\alpha_{1}, \ldots, \alpha_{k}) \in
	S^{k}$, provided that we pick $L$ to be large enough and
	$\epsilon$ small enough. 
	Since
	\begin{equation*}
		A = \xi(S^{k}) \in \Gamma_{k},
	\end{equation*}
	it follows that
	\begin{equation*}
		c_{k} = \inf_{A \in \Gamma_{k}} \sup_{u \in A} I_{c}[u] < 0
	\end{equation*}
	for all $k \in \mathbb{N}$.
	
	Let
	\begin{equation*}
		\hat{c}_{k} = \inf_{A \in \hat{\Gamma}_{k}} \sup_{u
		\in A} I_{c}[u].
	\end{equation*}
	Clearly $c_{k} \leq \hat{c}_{k}$, while setting a truncation
	$\hat{u}$ from $u$ by
	\begin{equation*}
		\hat{u}(x,y) =
		\begin{cases}
			u(x,y) & \text{if } \abs{u(x,y)} \leq 1 \\
			\frac{u(x,y)}{\abs{u(x,y)}} & \text{if } \abs{u(x,y)} > 1
		\end{cases}
	\end{equation*}
	yields $I_{c}[\hat{u}] \leq I_{c}[u]$ for all $u \in
	\textbf{E}_{*}$. Given any $A \in \Gamma_{k}$, define
	\begin{equation*}
		\hat{A} = \settc{\hat{u}}{u \in A}.
	\end{equation*}
	Since the map $u \mapsto \hat{u}$ is continuous in
	$\textbf{E}_{*}$, we conclude that $\hat{A} \in \hat{\Gamma}_{k}$,
	$c_{k} = \hat{c}_{k}$ and \eqref{eq:ckequalcbark} holds. 
\end{proof}

\begin{proposition}
	\label{prop:PSbounded}
	For $k \in \mathbb{N}$, let $\hat{A}_{n} \in
	\hat{\Gamma}_{k}$ be such that 
	\begin{equation*}
		c_{k} \leq \sup_{u \in \hat{A}_{n}} I_{c}[u] \leq c_{k} +
		\frac{1}{n} < 0.
	\end{equation*}
	
	Then there is a $u_{n} \in \hat{A}_{n}$ such that
	\begin{equation*}
		c_{k} - \frac{2}{n} \leq I_{c}[u_{n}] \leq c_{k} + \frac{1}{n}
		\qquad \norm{I'_{c}[u_{n}]}_{\mathbf{E}^{\dag}_{*}} \leq 8
		\sqrt{\frac{2L_*}{n}}.
	\end{equation*}
\end{proposition}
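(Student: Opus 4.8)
The plan is to argue by contradiction using a quantitative deformation whose explicit threshold $8\sqrt{2L_*/n}$ is produced by the Lipschitz estimate of Lemma~\ref{lem:gradLip}. Write $\epsilon = 1/n$ and suppose, contrary to the claim, that
\[
	\norm{I_{c}'[u]}_{\mathbf{E}_{*}^{\dag}} > 8\sqrt{\tfrac{2L_*}{n}}
	\qquad \text{for every } u \in \hat{A}_{n} \text{ with } I_{c}[u] \ge c_{k} - 2\epsilon.
\]
Since $\sup_{\hat{A}_{n}} I_{c} \le c_{k} + \epsilon < 0$, the whole set $\hat{A}_{n}$ --- and, for a tube radius $\delta$ of order $1/\sqrt{nL_*}$, also its $2\delta$-neighborhood --- lies in the sublevel region $\{I_{c} \le 0\}$ on which Lemma~\ref{lem:gradLip} guarantees that $I_{c}'$ is Lipschitz with constant $L_*$. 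Using this Lipschitz bound to transfer the lower bound on $\norm{I_{c}'}$ from $\hat{A}_{n}$ to the tube, one keeps $\norm{I_{c}'[v]}$ bounded below by a fixed positive multiple of $\sqrt{2L_*/n}$ for all $v$ in the band $I_{c}^{-1}([c_{k}-2\epsilon, c_{k}+2\epsilon])$ near $\hat{A}_{n}$.

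First I would build the descent deformation. Because $I_{c}'$ is Lipschitz on $\{I_{c}\le 0\}$, the negative pseudo-gradient flow is well defined there and admits a descent-lemma type estimate; cutting the flow off outside the energy band $[c_{k}-2\epsilon, c_{k}+2\epsilon]$ (via a Lipschitz cutoff in the energy variable) produces a continuous map $\eta$ that fixes the points with $I_{c}\le c_{k}-2\epsilon$, never increases $I_{c}$, and moves every point by at most $\delta$. Since $I_{c}$ is even in $u$, $\eta$ may be taken odd, and energy monotonicity keeps the flow inside $\{I_{c}\le 0\}$, so the Lipschitz estimate stays valid throughout. Calibrating the quantitative decrease to the threshold $8\sqrt{2L_*/n}$ forces
\[
	\eta\bigl(\hat{A}_{n}\bigr) \subset \settc{u \in \mathbf{E}_{*}}{I_{c}[u] < c_{k} - \epsilon},
\]
because every point of $\hat{A}_{n}$ starts with $I_{c}\le c_{k}+\epsilon$.

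Next I would restore the pointwise constraint, which the flow need not preserve. Composing $\eta$ with the truncation $u \mapsto \hat{u}$ used to establish $c_{k}=\hat{c}_{k}$ in \eqref{eq:ckequalcbark} repairs this: the map $u \mapsto \hat{u}$ is continuous and odd, satisfies $\abs{\hat{u}}\le 1$, and does not increase $I_{c}$. Hence $\hat{A}_{n}^{*} := \settc{\widehat{\eta(u)}}{u \in \hat{A}_{n}}$ is closed, symmetric ($\hat{A}_{n}^{*} = -\hat{A}_{n}^{*}$) and satisfies $\abs{u}\le 1$; as $\widehat{\eta(\cdot)}$ is an odd continuous map, the genus does not drop, so $\gamma(\hat{A}_{n}^{*}) \ge \gamma(\hat{A}_{n}) \ge k$ and thus $\hat{A}_{n}^{*}\in\hat{\Gamma}_{k}$. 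But $\sup_{\hat{A}_{n}^{*}} I_{c} < c_{k}$, contradicting $c_{k}=\inf_{\hat{A}\in\hat{\Gamma}_{k}}\sup I_{c}$ from \eqref{eq:ckequalcbark}. Negating the opening assumption then yields a point $u_{n}\in\hat{A}_{n}$ with $I_{c}[u_{n}]\ge c_{k}-2/n$ (and automatically $\le c_{k}+1/n$, being in $\hat{A}_{n}$) and $\norm{I_{c}'[u_{n}]}_{\mathbf{E}_{*}^{\dag}} \le 8\sqrt{2L_*/n}$.

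The main obstacle is keeping the deformed family inside $\hat{\Gamma}_{k}$ while extracting a sharp constant. Two points require care: the flow must remain in $\{I_{c}\le 0\}$, where Lemma~\ref{lem:gradLip} is available --- guaranteed by the energy monotonicity of the flow --- and the bound $\abs{u}\le 1$ must be reinstated by a map that is simultaneously continuous, odd, energy-nonincreasing and genus-nondecreasing, which is exactly what the truncation provides. The remaining effort is the bookkeeping that converts the pointwise lower bound on $\norm{I_{c}'}$ over $\hat{A}_{n}$, through the $L_*$-Lipschitz transfer to the $2\delta$-tube and the descent estimate, into the precise constant $8\sqrt{2L_*/n}$ and the energy window $[c_{k}-2/n,\,c_{k}+1/n]$.
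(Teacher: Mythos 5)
Your argument is correct and follows essentially the same route as the paper: a contradiction via a quantitative deformation on the energy band $[c_{k}-\tfrac{2}{n},\,c_{k}+\tfrac{2}{n}]$, with the Lipschitz constant $L_*$ of Lemma~\ref{lem:gradLip} used to propagate the lower bound on $\norm{I_{c}'}$ to a $2\delta$-tube around $\hat{A}_{n}$ with $\delta \sim 1/\sqrt{nL_*}$ (the paper simply invokes Lemma~2.3 of Willem rather than building the cut-off pseudo-gradient flow by hand). The only cosmetic difference is at the end: you re-truncate to land back in $\hat{\Gamma}_{k}$, whereas the paper just observes $\eta(1,\hat{A}_{n})\in\Gamma_{k}$ and contradicts $c_{k}=\inf_{A\in\Gamma_{k}}\sup_{A} I_{c}$ directly, which suffices since $c_{k}=\hat{c}_{k}$.
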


\begin{proof}
	The proof is based on the deformation theory. Suppose the
	assertion is false, then
	\begin{equation*}
		\norm{I'_c[v]}_{\mathbf{E}_{*}^{\dag}} > 8
		\sqrt{\frac{L_*}{n}}
	\end{equation*}
	for all $v \in \hat{A}_{n}$ such that $c_{k} - \frac{2}{n} \leq
	I_{c}[v] \leq c_{k} + \frac{1}{n}$. Let $\delta =
	\frac{2}{\sqrt{nL_{*}}}$ with $L_*$ given by Lemma
	\ref{lem:gradLip}. If $u \in I_{c}^{-1}([c_{k} - \frac{2}{n},
	c_{k} + \frac{2}{n}])$ and $\norm{u - v}_{\mathbf{E}_{*}} <
	2\delta$, invoking Lemma \ref{lem:gradLip} yields
	\begin{multline*}
		\norm{I'_{c}[u]}_{\mathbf{E}_{*}^{\dag}} \geq
		\norm{I'_{c}[v]}_{\mathbf{E}_{*}^{\dag}} - \norm{I'_{c}[v] -
		I'_{c}[u]}_{\mathbf{E}_{*}^{\dag}} \\
		\geq 8 \sqrt{\frac{L_*}{n}} - 2L_*\delta = 4
		\sqrt{\frac{L_*}{n}}.
	\end{multline*}
	We can then apply Lemma 2.3 of \cite{Willem_96} with $S =
	\hat{A}_{k}$ and $\epsilon = \frac{1}{n}$ since
	\begin{equation*}
		\frac{8\epsilon}{\delta} = \frac{8}{n} \frac{\sqrt{nL_*}}{2} =
		4 \sqrt{\frac{L_*}{n}}.
	\end{equation*}
	As a consequence
	\begin{equation*}
		\eta(1,\hat{A}_{n}) \subset I_{c}^{c_{k}-1/n}
	\end{equation*}
	and we have reached a contradiction since $\eta(1,\hat{A}_{n}) 
	\in \Gamma_{k}$.
\end{proof}

The following result follows from an application of the
Ljusternik-Schnirelman theory. We refer to
\cite{Heinz_1987,Rabinowitz86-1} for related applications to
differential equations.
\begin{proposition}
	\label{prop:LS}
	Let $\lambda_{1} < 1$ and $c < 2\sqrt{(1 - \lambda_{1})}$. Then
	there exist a sequence of critical points $\{\hat u_k\}$ of
	$I_{c}$ such that $I_c[\hat u_k] \leq I_c[\hat u_{k+1}] < 0$,
	$\abs{u(x,y)} \leq 1$ for all $(x,y) \in \Omega_{*}$,
	\begin{equation}
		\label{eq:Iuk}
		\lim_{k\rightarrow+\infty}I_c[\hat u_k] = 0
	\end{equation}
	and
	\begin{equation}
		\label{eq:>0}
		\int_{-\infty}^{0}\int_{\Omega_y}e^x(\hat u_k)_x^2\, dxdy>0.
	\end{equation}
\end{proposition}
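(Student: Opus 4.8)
The plan is to realize each min-max level $c_k = \hat c_k$ from \eqref{eq:ckequalcbark} as a critical value of $I_c$, and then to show that these values increase to $0$. First I would fix $k$ and choose, for each $n$, a set $\hat A_n \in \hat\Gamma_k$ with $\sup_{\hat A_n} I_c \leq c_k + 1/n$; Proposition \ref{prop:PSbounded} then produces $u_n \in \hat A_n$ with $I_c[u_n] \to c_k$ and $\norm{I'_c[u_n]}_{\mathbf{E}_{*}^{\dag}} \to 0$. Because $u_n \in \hat A_n$ we have $\abs{u_n} \leq 1$ on $\Omega_*$, so $\norm{u_n}_{L^6_w} \leq \norm{u_n}_{L^2_w}$ is bounded by Lemma \ref{bound}. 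Hence the hypotheses of Lemma \ref{lem:PS} are met and, along a subsequence, $u_n \to \hat u_k$ with $I_c[\hat u_k] = c_k < 0$ and $I'_c[\hat u_k] = 0$. Passing to a further subsequence converging almost everywhere, the pointwise bound $\abs{u_n} \leq 1$ is inherited by $\hat u_k$, and since $\hat u_k$ is continuous by Proposition \ref{prop:nobdry} we obtain $\abs{\hat u_k(x,y)} \leq 1$ everywhere. Monotonicity $c_k \leq c_{k+1}$ is immediate from $\hat\Gamma_{k+1} \subset \hat\Gamma_k$, and $c_k < 0$ was already shown. This establishes all the stated properties of the $\hat u_k$ except \eqref{eq:Iuk} and \eqref{eq:>0}, to which I turn next.

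For \eqref{eq:>0} it suffices to note that $\hat u_k \neq 0$ because $I_c[\hat u_k] = c_k < 0 = I_c[0]$. If the integral in \eqref{eq:>0} vanished, then $(\hat u_k)_x = 0$ almost everywhere; writing $\hat u_k(x,y) = -\int_x^0 (\hat u_k)_x(s,y)\, ds$, which is legitimate since functions in $\mathbf{E}_*$ have vanishing trace on $\{0\}\times\Omega_y$, would force $\hat u_k \equiv 0$, a contradiction. Hence \eqref{eq:>0} holds.

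The main work, and the step I expect to be the principal obstacle, is \eqref{eq:Iuk}. Set $c^* = \lim_k c_k$, which exists and satisfies $c^* \leq 0$ by monotonicity; I would argue that $c^* < 0$ is impossible. The crucial compactness input is that for small $\epsilon > 0$ with $c^* + \epsilon < 0$ the set of critical points in the band, $K = \settc{u \in \mathbf{E}_*}{I'_c[u] = 0,\ c^* - \epsilon \leq I_c[u] \leq c^* + \epsilon,\ \abs{u}\leq 1}$, is compact: any sequence in $K$ is a Palais--Smale sequence bounded in $L^6_w$, so Lemma \ref{lem:PS} extracts a convergent subsequence. Since $I_c$ is even and $0 \notin K$, the set $K$ is symmetric and has finite genus $m := \gamma(K)$. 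Choosing a symmetric neighborhood $U$ of $K$ with $\gamma(\overline U) = m$, the quantitative deformation lemma (Lemma 2.3 of \cite{Willem_96}), whose hypotheses hold on $\{I_c \leq 0\} \supset \{I_c \leq c^*+\epsilon\}$ thanks to the gradient estimate of Lemma \ref{lem:gradLip}, yields an odd deformation carrying $\{I_c \leq c^* + \epsilon\} \setminus U$ into $\{I_c \leq c^* - \epsilon\}$; composing it with the truncation $u \mapsto \hat u$ (which is odd, continuous, does not increase $I_c$, and restores $\abs{u}\leq 1$) produces a deformation $\eta$ preserving the constraint.

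To close the argument, for large $k$ pick $\hat A \in \hat\Gamma_k$ with $\sup_{\hat A} I_c < c^* + \epsilon$; by subadditivity of the genus, $\gamma(\hat A \setminus U) \geq k - m$, while the genus does not decrease under the odd map $\eta(1,\cdot)$, so $\eta(1, \hat A \setminus U) \in \hat\Gamma_{k-m}$ and $\sup I_c$ on it is at most $c^* - \epsilon$. This forces $c_{k-m} \leq c^* - \epsilon$ for all large $k$, contradicting $c_{k-m} \to c^*$. Therefore $c^* = 0$, which is \eqref{eq:Iuk}. The delicate points are keeping the deformation inside the constraint set $\{\abs{u}\leq 1\}$ via truncation and establishing that the genus of the critical band $K$ is finite, both of which rest on the compactness furnished by Lemma \ref{lem:PS}.
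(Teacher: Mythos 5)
Your argument is correct and follows essentially the same route as the paper: Palais--Smale sequences supplied by Proposition \ref{prop:PSbounded}, compactness via Lemma \ref{lem:PS} to realize each $c_k<0$ as a critical value with $\abs{\hat u_k}\le 1$, and the standard Ljusternik--Schnirelman argument (compactness of the critical set at the limit level, a finite-genus neighborhood, and a deformation) to rule out $\lim_k c_k<0$. The only differences are cosmetic: you phrase the final contradiction as $c_{k-m}\le c^*-\epsilon$ via subadditivity of the genus, while the paper contradicts the infinite genus of $I_c^{\chi+\epsilon}\cap\mathcal{B}_{\bar C}$, and you are slightly more explicit about keeping the deformation inside the constraint class by composing with the truncation $u\mapsto\hat u$.
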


\begin{proof}
	It has been shown that $\hat{c}_{k} = c_{k}$ is a critical level.
	Following from Proposition \ref{prop:PSbounded}, we can find a
	Palais-Smale sequence $v_{n}$ at level $c_{k}$ such that
	$\abs{v_{n}(x,y)} \leq 1$ for all $(x,y) \in \Omega_{*}$. By Lemma
	\ref{lem:PS} we deduce that $v_{n}$ converge to a critical point
	$\hat{u}_{k}$ at level $c_{k}$ such that $\abs{\hat{u}_{k}(x,y)}
	\leq 1$ for all $(x,y) \in \Omega_{*}$. Thus we get a sequence of
	critical points $\{\hat u_k\}$ such that $I_c[\hat u_k] = c_{k}$.

	If $\int_{-\infty}^{0}\int_{\Omega_y}e^x(\hat u_k)_x^2\, dxdy = 0$
	for some $k \in \mathbb{N}$, then $\hat u_k \equiv 0$; however
	this would violate $I_c[\hat u_k] < 0$, thus \eqref{eq:>0} must
	hold.
	
	To show \eqref{eq:Iuk}, we can follow a variant of a
	rather standard procedure (see e.g. \cite[Theorem
	10.10]{Ambrosetti_Malchiodi_2007}).
	Let
	\begin{equation*}
		\mathcal{B}_{C} = \settc{u \in
		\mathbf{E}_{*}}{\norm{u}_{L^{6}_{w}} \leq C}.
	\end{equation*}
	Notice that if $u \in \mathbf{E}_{*}$ and
	$\norm{u}_{L^{\infty}(\Omega_{*})} \leq 1$ then
	$\norm{u}_{L^{6}_{w}} \leq \abs{\Omega_{y}}^{1/6}$. Since
	$\abs{u(x,y)} \leq 1$ implies $u \in \mathcal{B}_{{C}}$ for all $C
	\geq \bar{C} = \abs{\Omega_{y}}^{1/6}$, we can find sets with
	genus $k$ in $I^{0} \cap \mathcal{B}_{C}$ for all $k \in
	\mathbb{N}$.

	Suppose that 
	\begin{equation*}
		\lim_{k \to +\infty} I_{c}[\hat{u}_{k}] = \lim_{k \to +\infty}
		c_{k} = \chi < 0.
	\end{equation*}
	Then $\gamma(I_{c}^{\chi+\epsilon} \cap \mathcal{B}_{\bar{C}}) =
	+\infty$ for all $\epsilon > 0$ such that $\chi + \epsilon < 0$.
	Since the set
	\begin{equation*}
		\hat{Z}_{\chi} = \settc{u \in \mathbf{E}_{*}}{I_{c}[u] = \chi
		\text{ and } I'_{c}[u] = 0 \text{ and }
		\norm{u}_{L^{6}_{w}(\Omega_{*})} \leq \bar{C} }
	\end{equation*}
	is compact in $\mathbf{E}_{*}$, using a property of genus, we can
	find a neighborhood $U$ of $\hat{Z}_{\chi}$ which has finite
	genus, say $\gamma(U) = k_{0} < +\infty$. Let $\mathcal{A} =
	I_{c}^{\chi+\epsilon} \cap \mathcal{B}_{\bar{C}}$. As in proving
	Proposition \ref{prop:PSbounded} and Lemma \ref{lem:gradLip},
	since the Palais-Smale property holds in $\mathcal{A}$, when
	$\epsilon$ is small enough we can find a map $\eta$ such that
	\begin{equation*}
		I_{c}[u] < \chi -\epsilon \qquad \text{for all } u \in
		\eta(\mathcal{A} \setminus U).
	\end{equation*} 
	This implies that $\gamma(\mathcal{A} \setminus U) \leq
	\gamma(\eta(\mathcal{A} \setminus U)) = k_{1} < +\infty$. Then
	$\mathcal{A} = (\mathcal{A} \setminus U) \cup (\mathcal{A} \cap
	U)$ gives
	\begin{equation*}
		\gamma(\mathcal{A}) \leq \gamma(\mathcal{A} \setminus U) +
		\gamma(\mathcal{A} \cap U) \leq k_{1} + k_{0} < +\infty,
	\end{equation*}
	which leads to a contradiction.
\end{proof}

\begin{lemma}
	\label{zero}
	If $\{\hat u_k\}$ is the sequence of critical points obtained by
	Proposition \ref{prop:LS}, then
	\begin{align}
		\lim_{k\rightarrow+\infty}\int_{\Omega_{*}}e^x\hat u_k^4\,
		dxdy = 0, \label{zero1}\\
		\lim_{k\rightarrow+\infty}\int_{\Omega_{*}}e^x\hat u_k^2\,
		dxdy = 0, \label{zero2}\\
		\lim_{k\rightarrow+\infty}\int_{\Omega_{*}}e^x(\hat
		u_k)_x^2dxdy = 0, \label{zero3}\\
		\lim_{k\rightarrow+\infty}\int_{\Omega_{*}}e^x\vert\nabla_y
		\hat u_k\vert^2dxdy = 0. \label{zero4}
	\end{align}
\end{lemma}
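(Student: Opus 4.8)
The plan is to extract two scalar identities from each critical point $\hat u_k$ and combine them. First I would test the critical point relation $I_c'[\hat u_k] = 0$ against the admissible function $\phi = \hat u_k \in \mathbf{E}_*$, obtaining the Nehari-type identity
\begin{equation*}
	\int_{\Omega_*} \left(c^2 (\hat u_k)_x^2 + \abs{\nabla_y \hat u_k}^2 + \hat u_k^4\right) e^x \, dxdy = \int_{\Omega_*} e^x \hat u_k^2 \, dxdy.
\end{equation*}
Substituting this relation into the definition \eqref{gradE} of the functional makes all the quadratic contributions cancel, leaving
\begin{equation*}
	I_c[\hat u_k] = -\frac{1}{4} \int_{\Omega_*} e^x \hat u_k^4 \, dxdy.
\end{equation*}
Since $I_c[\hat u_k] \to 0$ by \eqref{eq:Iuk}, this yields \eqref{zero1} at once.

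Next I would deduce \eqref{zero2} from \eqref{zero1} by the H\"older inequality, using the fact that the total weight is finite, $\int_{\Omega_*} e^x \, dxdy = \abs{\Omega_y}$:
\begin{equation*}
	\int_{\Omega_*} e^x \hat u_k^2 \, dxdy \leq \abs{\Omega_y}^{1/2} \left(\int_{\Omega_*} e^x \hat u_k^4 \, dxdy\right)^{1/2} \longrightarrow 0.
\end{equation*}
Finally, returning to the Nehari identity above, its right-hand side tends to zero by \eqref{zero2}, while the three summands on the left are each nonnegative; hence each of them tends to zero separately. Dividing the first by $c^2 > 0$ gives \eqref{zero3}, and the second is exactly \eqref{zero4}.

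I expect no genuine obstacle here: the whole lemma reduces to the single algebraic observation that, once the Nehari identity is imposed, $I_c[\hat u_k]$ collapses to a negative multiple of the weighted $L^4$-norm of $\hat u_k$. The only point needing a word of justification is that $\hat u_k$ is an admissible test function, which is immediate because $\hat u_k \in \mathbf{E}_*$ and $I_c \in C^1(\mathbf{E}_*;\mathbb{R})$; the uniform bound $\abs{\hat u_k} \leq 1$ from Proposition \ref{prop:LS} is not even needed for the argument.
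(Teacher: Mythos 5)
Your proposal is correct and follows essentially the same route as the paper: test $I_c'[\hat u_k]=0$ against $\hat u_k$ to get the Nehari identity, deduce $I_c[\hat u_k]=-\tfrac14\int_{\Omega_*}e^x\hat u_k^4\,dxdy$, conclude \eqref{zero1} from \eqref{eq:Iuk}, get \eqref{zero2} by H\"older with the finite weight, and then recover \eqref{zero3} and \eqref{zero4} from nonnegativity of the remaining terms. The only cosmetic difference is that the paper extracts the last two limits from the vanishing of $I_c[\hat u_k]$ and of $\int e^xF(\hat u_k)$ rather than from the Nehari identity directly, which is an equivalent bookkeeping.
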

\begin{proof}
	Since
	\begin{equation*}
		\int_{\Omega_{*}}\left({c^2}(\hat u_k)_x^2 \, dxdy +
		\vert\nabla_y \hat u_k\vert^2-F'(\hat u_k)u_k\right)e^xdxdy =
		\langle I'_c[\hat u_k],\hat u_k \rangle = 0,
	\end{equation*}
	it follows that 
	\begin{multline}
		\label{-1/4}
		0 = \lim_{k\rightarrow+\infty}I_c[\hat u_k] \\
		=\lim_{k\rightarrow+\infty}\int_{\Omega_{*}}
		\left(\frac{c^2}{2}(\hat u_k)_x^2 \, dxdy +
		\frac{1}{2}\vert\nabla_y \hat u_k\vert^2+F(\hat u_k)\right)e^x
		\, dxdy \\
		= \lim_{k\rightarrow+\infty}-\frac{1}{4}
		\int_{\Omega_{*}}e^x\hat u_k^4 \, dxdy.
	\end{multline}
	This together with \eqref{Ho} yields
	\begin{equation*}
		\lim_{k\rightarrow+\infty}\int_{\Omega_{*}}e^x\hat u_k^2 \,
		dxdy = 0.
	\end{equation*}
	Combining with \eqref{-1/4} completes the proof.
\end{proof}

\begin{lemma}
	\label{uk-}
	If $u \in \mathbf{E}_{*}$ is a bounded critical point of $I_c$
	then $u_x \in L^2(\Omega_*)$ and
	\begin{align}
		\label{-decay1}
		\lim_{x\to -\infty} u_x(x,y)=0
	\end{align}
	uniformly in $y$.
\end{lemma}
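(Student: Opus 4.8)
The plan is to exploit the Hamiltonian structure of \eqref{LS} in the $x$-variable. I would introduce the cross-sectional energy
\[
	\mathcal{H}(x) := \int_{\Omega_y} \left( \frac{c^2}{2} u_x^2 -
	\frac{1}{2}\abs{\nabla_y u}^2 - F(u) \right) dy .
\]
Since $u \in C^{2,\alpha}(\Omega_*) \cap C^{1,\alpha}(\bar\Omega_*)$ by Proposition \ref{prop:nobdry}, one may differentiate under the integral sign and integrate by parts in $y$, the boundary terms vanishing because $u$, and hence $u_x$, vanish on $\mathbb{R}\times\partial\Omega_y$. This gives
\[
	\mathcal{H}'(x) = \int_{\Omega_y} u_x \left( c^2 u_{xx} +
	\Delta_y u - F'(u) \right) dy = -c^2 \int_{\Omega_y} u_x^2 \, dy
	\le 0 ,
\]
where the last equality uses $c^2 u_{xx} + \Delta_y u - F'(u) = -c^2 u_x$, i.e. the equation \eqref{LS} (recall $-F'(w)=w(1-w^2)$). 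Thus $\mathcal{H}$ is nonincreasing in $x$, and the dissipated quantity is exactly the integral we want to control.

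First I would establish $u_x \in L^2(\Omega_*)$. By Proposition \ref{prop:nobdry} the quantities $\norm{u}_\infty$, $\norm{u_x}_\infty$ and $\norm{\nabla_y u}_\infty$ are all finite, so the integrand defining $\mathcal{H}$ is bounded uniformly in $x$; since $\Omega_y$ is bounded this yields $\abs{\mathcal{H}(x)} \le M$ for all $x \le 0$ and a constant $M$. Integrating the energy identity over $[a,0]$ gives
\[
	c^2 \int_{a}^{0}\int_{\Omega_y} u_x^2 \, dy\, dx =
	\mathcal{H}(a) - \mathcal{H}(0) \le 2M ,
\]
and letting $a \to -\infty$ shows $\int_{\Omega_*} u_x^2 \, dxdy \le 2M/c^2 < \infty$, so $u_x \in L^2(\Omega_*)$. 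In particular the map $x \mapsto \int_{\Omega_y} u_x^2\, dy$ is integrable on $(-\infty,0]$, and $\mathcal{H}$, being monotone and bounded, has a finite limit as $x \to -\infty$.

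The remaining and most delicate point is to upgrade this $L^2$ integrability to the uniform pointwise decay \eqref{-decay1}. Here I would argue by contradiction in the spirit of Barbalat's lemma, using the uniform continuity of $u_x$ furnished by the bound on $\norm{u}_{C^{1,\alpha}((-\infty,0]\times\bar\Omega_y)}$. If \eqref{-decay1} failed there would be $\epsilon > 0$ and points $(x_n,y_n)$ with $x_n \to -\infty$ and $\abs{u_x(x_n,y_n)} \ge \epsilon$. Because $u_x$ vanishes on $\mathbb{R}\times\partial\Omega_y$ and is uniformly Hölder continuous, each $y_n$ stays at a distance bounded below from $\partial\Omega_y$, and there is a fixed radius $\rho > 0$, depending only on $\epsilon$ and the Hölder data, such that $\abs{u_x} \ge \epsilon/2$ on $B((x_n,y_n),\rho) \subset \Omega_*$. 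Passing to a subsequence with the $x_n$ spaced more than $2\rho$ apart makes these balls disjoint, whence
\[
	\int_{\Omega_*} u_x^2 \, dxdy \ge \sum_n
	\int_{B((x_n,y_n),\rho)} u_x^2 \, dxdy \ge \sum_n
	\frac{\epsilon^2}{4}\abs{B_\rho} = +\infty ,
\]
contradicting $u_x \in L^2(\Omega_*)$. I expect this last step to be the main obstacle: the $L^2$ bound only controls $u_x$ on average, and it is precisely the uniform Hölder continuity together with the vanishing of $u_x$ on the lateral boundary that converts integrability into the claimed uniform limit; some care is needed to guarantee that the balls centred at the bad points genuinely lie inside $\Omega_*$ and stay away from $\partial\Omega_y$.
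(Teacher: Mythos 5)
Your proposal is correct and follows essentially the same route as the paper: the identity $\mathcal{H}'(x) = -c^2\int_{\Omega_y} u_x^2\,dy$ is just the differential form of the paper's identity \eqref{eee1}--\eqref{eq:eee2} obtained by multiplying \eqref{te} by $u_x$ and integrating, the $C^{1,\alpha}$ bounds give the same uniform bound on the cross-sectional energy and hence $u_x\in L^2(\Omega_*)$, and your Barbalat-type contradiction argument is a careful spelling-out of the paper's closing remark that \eqref{-decay1} follows from the uniform continuity of $u_x$.
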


\begin{proof}
	Multiplying \eqref{te} by $u_x$ and integrating by parts, we get
	\begin{multline}
		\label{eee1}
		c^2\int_{x_n}^{0}\int_{\Omega_y} u_x^2 \, dydx\\
		=-\int_{\Omega_y}\left(\frac{c^2}{2}u_x^2-\frac{1}{2}\vert\nabla_y
		u\vert^2-F(u)\right)dy\bigg|_{x=x_n}^{x=0}\\
		-\int_{\partial\Omega_y}\int_{x_n}^{0}u_x\frac{\partial
		u}{\partial\nu_y}dxd\sigma_y,
	\end{multline}
	where $\nu_y$ is a normal vector to $\partial\Omega_y$ on which
	$d\sigma_y$ is an integral element. The last term of \eqref{eee1}
	vanishes since $u_{x} \equiv 0$ on $\partial \Omega_{y}$ due to
	the boundary conditions and hence
	\begin{multline}
		\label{eq:eee2}
		c^2\int_{x_n}^{0}\int_{\Omega_y} u_x^2 \, dydx \\
		= - \int_{\Omega_y}\left( \frac{1}{2} \abs{\nabla_y u(x_{n}, y)}^2
		+ F(u(x_{n}, y)) \right) \, dy \\
		+ \frac{c^2}{2} \int_{\Omega_y} \left( u_x^2(x_{n}, y) -
		u_x^2(0, y) \right) \, dy.
	\end{multline}
	
	Using the facts that $u$ and $\nabla u$ are uniformly bounded, we
	arrive at
	\begin{equation}
		\label{i0}
		\int_{x_n}^{0}\int_{\Omega_y} u_x^2 \, dxdy\leq C
	\end{equation}
	with $C$ being a constant independent of $n$. Passing to the limit
	as $n \to \infty$ yields $u_x\in L^2(\Omega_*)$. Then
	\eqref{-decay1} follows, since $u_x$ is uniformly continuous in
	$\Omega_*$.
\end{proof}

\begin{lemma}
	\label{uk-1}
	Suppose that $J$ has only isolated critical points in
	$H^1_0(\Omega_y)$ and $u$ is a nonconstant critical point of $I_c$
	obtained by Proposition~\ref{prop:LS}. Then
	\begin{equation}
		\label{-infy}
		\lim_{x \to -\infty}u(x,y) = v(y) \text{ uniformly in $y$}
	\end{equation}
	and $v$ is a critical point of $J$ with $J[v]<0$. Furthermore if
	$\lambda_1 < 1\leq \lambda_2$ then $v=u_+$ or $-u_+$.
\end{lemma}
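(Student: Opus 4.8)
The plan is to analyze the behavior of $u$ as $x\to-\infty$ by exploiting the invariance of \eqref{LS} under translations in $x$, together with the decay of $u_x$ from Lemma~\ref{uk-}, and then to isolate a single limiting profile using the hypothesis that the critical points of $J$ are isolated.

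First I would note that, since $\abs{u}\le1$, $u$ is a bounded critical point, so Proposition~\ref{prop:nobdry} gives a uniform bound on $\norm{u}_{C^{1,\alpha}((-\infty,0]\times\bar\Omega_y)}$ and Lemma~\ref{uk-} gives $u_x\to0$ uniformly in $y$ as $x\to-\infty$. The uniform $C^{1,\alpha}$ bound makes the curve $x\mapsto u(x,\cdot)$ continuous and precompact in $C^1(\bar\Omega_y)$, so its limit set $\omega$ as $x\to-\infty$ is nonempty, compact and connected. To describe $\omega$, fix $\psi\in\omega$ and $x_n\to-\infty$ with $u(x_n,\cdot)\to\psi$ in $C^1(\bar\Omega_y)$, and set $w_n(x,y):=u(x+x_n,y)$. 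Each $w_n$ solves \eqref{LS} on $(-\infty,-x_n)\times\Omega_y$; the uniform $C^{1,\alpha}$ bound and Schauder estimates yield, along a subsequence, convergence in $C^2_{\mathrm{loc}}(\mathbb{R}\times\bar\Omega_y)$ to a solution $w$ of $c^2(w_{xx}+w_x)+\Delta_y w+w(1-w^2)=0$ on the full cylinder. Since $(w_n)_x(x,y)=u_x(x+x_n,y)$ and $x+x_n\to-\infty$ uniformly on compact $x$-intervals, Lemma~\ref{uk-} forces $w_x\equiv0$; hence $w=w(y)$ solves \eqref{b} and $w(0,\cdot)=\psi$, so $\psi$ is a critical point of $J$. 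Thus $\omega$ consists of critical points of $J$.

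I would then upgrade this to convergence. Because $J$ has only isolated critical points, its critical set is discrete and closed (an accumulation point would, by elliptic regularity, solve \eqref{b} and be a non-isolated critical point). A compact subset of a discrete set is finite, and a connected finite set is a singleton; hence $\omega=\{v\}$ for a single critical point $v$ of $J$, and precompactness of the trajectory gives \eqref{-infy}, uniformly in $y$. I expect this identification to be the main obstacle: passing to the limit along translations needs the uniform regularity and the decay of $u_x$, and reducing a priori a continuum of admissible equilibria to a single one is precisely where isolatedness, via the connectedness of $\omega$, is indispensable.

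Finally, to obtain $J[v]<0$ and the last assertion, I would introduce the energy-type quantity
\begin{equation*}
	H(x):=\int_{\Omega_y}\left(\frac{c^2}{2}u_x^2-\frac12\abs{\nabla_y u}^2-F(u)\right)dy.
\end{equation*}
Differentiating, integrating by parts in $y$ (the boundary term vanishes since $u\equiv0$, hence $u_x\equiv0$, on $\partial\Omega_y$), and using \eqref{te} gives $H'(x)=-c^2\int_{\Omega_y}u_x^2\,dy\le0$, so $H$ is non-increasing. The boundary condition $u(0,\cdot)=0$ gives $H(0)=\frac{c^2}{2}\int_{\Omega_y}u_x^2(0,y)\,dy\ge0$, while the convergence $u(x,\cdot)\to v$ with $u_x(x,\cdot)\to0$ gives $\lim_{x\to-\infty}H(x)=-J[v]$. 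Monotonicity then forces $-J[v]\ge H(0)\ge0$; and if equality held throughout, $H$ would vanish identically, whence $u_x\equiv0$ and, with $u(0,\cdot)=0$, $u\equiv0$, contradicting that $u$ is nonconstant. Therefore $J[v]<0$. When $\lambda_1<1\le\lambda_2$, Proposition~\ref{y}(b) states that $\pm u_+$ are the only nontrivial solutions of \eqref{b}; since $J[v]<0=J[0]$ forces $v\ne0$, we conclude $v=u_+$ or $v=-u_+$.
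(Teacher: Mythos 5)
Your proposal is correct, and its skeleton coincides with the paper's: translation compactness from the uniform $C^{1,\alpha}$ bound of Proposition~\ref{prop:nobdry} plus the decay $u_x \to 0$ of Lemma~\ref{uk-} identify every subsequential limit of $u(x,\cdot)$ as a critical point of $J$; isolatedness then forces a unique limit; and the same energy identity yields $J[v]<0$. The genuine difference is in how uniqueness of the limit is extracted. The paper argues by contradiction with an explicit dichotomy: if two distinct limits $v,\tilde v$ occurred along a sequence $x_n$, either the gaps $\abs{x_{n+1}-x_n}$ stay bounded (and the mean value theorem produces a point where $\abs{u_x}\ge \kappa/3M$, contradicting \eqref{-decay1}), or they diverge (and one picks an annulus $\kappa_1 \le \norm{w-v}\le\kappa_2$ free of critical points, finds crossing times $\bar\xi_n$ on the intermediate sphere, and extracts a limiting critical point there — a contradiction). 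You replace this with the standard dynamical-systems fact that the $\omega$-limit set of a continuous precompact trajectory is nonempty, compact and connected, hence, being contained in a set of isolated points, a singleton; this subsumes both of the paper's cases in one stroke and is arguably cleaner (the paper's case (ii) is essentially your connectedness proof unwound by hand). One small compatibility check you should make explicit: isolatedness is hypothesized in $H^1_0(\Omega_y)$ while your $\omega$-limit set lives in $C^1(\bar\Omega_y)$; since $C^1(\bar\Omega_y)$ convergence implies $H^1_0(\Omega_y)$ convergence on the bounded cross-section, points of $\omega$ cannot accumulate, so the argument stands. Your treatment of $J[v]<0$ via monotonicity of $H(x)$ is just the paper's identity \eqref{eee1}--\eqref{-l} in differential form and is equivalent, including the strictness argument ($H\equiv 0$ would force $u_x\equiv 0$ and then $u\equiv u(0,\cdot)=0$).
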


\begin{proof}
	We first show that for any sequence $x_{n} \to -\infty$ there
	exist a subsequence $x_{n_{k}}$ and a critical point $v(y)$ of $J$
	such that
	\begin{equation*}
		u(x_{n_{k}}, y) \to v(y) \text{ in } C^1(\bar \Omega_0),
	\end{equation*}
	where $\Omega_0=(-1,0)\times\Omega_y$.

	Take any sequence $x_{n} \to - \infty$. By Proposition
	\ref{prop:nobdry}, for all $n \in \bf N$, $\norm{u(x + x_{n},
	y)}_{C^{1, \alpha}(\bar \Omega_0)}$ are uniformly bounded. Hence
	along a subsequence $x_{n_{k}}$
	\begin{equation}
		\label{v}
		u(x + x_{n_{k}} ,y) \to v(x,y) \text{ in } C^1(\bar \Omega_0).
	\end{equation}
	It follows from \eqref{-decay1} that $v_x(x,y) \equiv 0$; thus $v
	\in C^1(\bar \Omega_y)$, a function which depends on $y$ only.
	
	Let $\phi\in H^1_0(\Omega_y)$. Multiplying \eqref{te} by $\phi$
	and integrating over $\Omega_0$, we get
	\begin{multline*}
		c^2\int_{\Omega_y}(u_x(x + x_{n_{k}},y) + u(x + x_{n_{k}}, y))
		\cdot \phi(y) \, dy \big|^{x=1}_{x=0} \\
		-\int_{\Omega_0}[\nabla_y u(x + x_{n_{k}},y) \cdot
		\nabla_y\phi(y) - f(u(x + x_{n_{k}},y))\phi(y)] \, dydx = 0.
	\end{multline*}
	Passing to the limit as $k \to \infty$, we use \eqref{-decay1} and
	\eqref{v} to obtain
	\begin{equation*}
		\int_{\Omega_0}[\nabla_y v(y) \cdot \nabla_y\phi(y) -
		f(v(y))\phi(y)] \, dxdy = 0.
	\end{equation*}
	Then 
	\begin{equation*}
		\int_{\Omega_y}[\nabla_y v \cdot \nabla_y\phi - f(v)\phi] \,
		dy = 0,
	\end{equation*}
	which shows $v$ is a critical point of $J$ and our claim hods.
	Invoking $(\ref{-decay1})$ and letting $k \to \infty$ in
	\eqref{eee1}, we also get
	\begin{multline}
		\label{-l}
		J[v] = \int_{\Omega_y}[\frac12|\nabla_y v|^2+F(v)]dy \\
		=-c^2 \int_{\Omega_*} u_x^2 \, dydx -
		\frac{c^2}{2}\int_{\Omega_y} u_x^2(0,y) \, dy < 0.
	\end{multline}
	From the above equality we deduce that, while $v$ in principle
	depends on the sequence $\{x_{n}\}$ and its subsequence $n_{k}$,
	the critical value $J[v]$ does not.
	
	To show \eqref{-infy}, we claim
	\begin{equation*}
		u(x + x_n,y) \to v(y) \text{ in } C^1(\bar \Omega_y) \text{
		along any sequence $x_n \to-\infty$}.
	\end{equation*}
	For otherwise, there exists a decreasing sequence $x_n \to-\infty$
	such that
	\begin{equation} 
		\label{oddeven}
		u(x+x_n,y) \to \tilde v(y)\mbox{ if n is odd, }u(x+x_n,y) \to
		v(y)\mbox{ if n is even},
	\end{equation}
	and 
	\begin{equation*}
		\label{v=}
		\kappa := \| \tilde v - v \|_{C(\bar \Omega_y)} > 0.
	\end{equation*}
	It follows from \eqref{-l} that
	\begin{equation*}
		E(v) = E(\tilde v).
	\end{equation*}

	(i) Suppose that there exists a decreasing sequence $x_n
	\to-\infty$ such that \eqref{oddeven} holds and $|x_{n+1}- x_n|
	\le M$ for all $n \in \mathbb{N}$. If $n$ is large then there
	exist $y \in \Omega_y$ and
	$\xi_n\in(x_{n+1},x_n)$ such that 
	\begin{equation*}
		|u_x(\xi_n,y)| = \frac{|u(x_n,y) - u(x_{n+1},y)|}{|x_n -
		x_{n+1}|} \ge \frac{\kappa}{3M}.
	\end{equation*}
	This contradicts $(\ref{-decay1})$.

	(ii) It remains to treat the case in which $|x_{n+1}- x_n| \to
	\infty$, $\|u(x + x_n,y) - \tilde v(y)\|_{C^1(\bar \Omega_y)} \to
	0$ and $\|u(x + x_n,y) - v(y)\|_{C^1(\bar \Omega_y)} \to 0$ as $n
	\to \infty$. From Lemma \ref{uk-}, we know $u_x \in
	L^2(\Omega_*)$. Hence there exists a sequence $\{\eta_n\}$ with
	$\lim_{n \to \infty} \eta_n = 0$ such that $\norm{u(x + x_{n}, y)-
	\tilde v(y)}_{C(\bar \Omega_0)} < \eta_n$, $\norm{u(x + x_{n+1},
	y)- v(y)}_{C(\bar \Omega_0)} < \eta_n$ and
	\begin{equation*}
		\label{eta}
		\int_{x_{n}}^{x_{n+1}}\int_{\Omega_y} |u_x(x,y)|^2 \, dydx <
		\eta_n.
	\end{equation*}
	Since $J$ has only isolated critical points in $H^1_0(\Omega_y)$,
	there exist $\kappa_1,\kappa_2 \in (0,\kappa)$ such that $w$ is
	not a critical point of $J$ if $\kappa_1 \leq \norm{w- v}_{C(\bar
	\Omega_y)} \leq \kappa_2$. Since $\norm{u(x + \xi, y)}_{C(\bar
	\Omega_0)}$ is continuous with respect to $\xi$, there exists
	$\bar\xi_n\in(x_{n+1},x_n)$ such that $\norm{u(x + \bar\xi_n, y)-
	v}_{C(\bar \Omega_0)}=\frac{\kappa_1+\kappa_2}{2}$. Arguing like
	in (i), we see that $|\bar\xi_n - x_n| \to \infty$ and $|\bar\xi_n
	- x_{n+1}| \to \infty$ as ${n \to \infty}$. Set
	$V_n(x,y)=u(x+\bar\xi_n, y)$. We then know that, along a
	subsequence, still denoted by $\{V_n\}$, we have that $V_n(x,y)
	\to V(y)$ in $C^1(\bar \Omega_0)$ with $V(y)$ a critical point of
	$J$. This is not possible, since
	\begin{equation*}
		\norm{V(y) - v(y)}_{C(\bar \Omega_y)} = \lim_{n \to - \infty}
		\norm{u(x + \bar\xi_n, y)- v(y)}_{C(\bar \Omega_0)} =
		\frac{\kappa_1+\kappa_2}{2}.
	\end{equation*}

	The last assertion follows from Proposition~\ref{y}. Now the proof
	is complete.

\end{proof}

\section{Passing to limit from approximate solutions} 
\label{sec_lim}

Let $\{\hat u_k\}$ be a sequence of solutions obtained in section
\ref{sec_variation}. First we consider the case that $\lambda_{1} < 1
\leq \lambda_{2}$. Then along a subsequence
\begin{equation}
	\label{-ify+}
	\lim_{x\to -\infty} \hat u_k(x,y) = u_+(y)
\end{equation}
or
\begin{equation*}
	\lim_{x\to -\infty} \hat u_k(x,y) = -u_+(y).
\end{equation*}
We may assume \eqref{-ify+} holds, for otherwise taking $-\hat u_k$
will do.

By \eqref{zero3} and Proposition~\ref{prop:LS}
\begin{align}
	&I_c[\hat u_k] \leq I_c[\hat u_{k+1}] < 0, \label{K1}\\ 
	&\lim_{k\rightarrow+\infty}I_c[\hat u_k] = 0, \label{K2} \\
	&\lim_{k \rightarrow + \infty} \int_{\Omega_*} e^x (\hat
	u_k)_x^2 \, dxdy = 0, \label{K4}
\end{align}
while from \eqref{h4} and \eqref{pineq} we deduce
\begin{align}
	\label{K3}
	0 < \int_{\Omega_*} e^x(\hat u_k)_x^2 \, dxdy\leq \frac{2}{c^2}
	\int_{\Omega_*} e^x \, dxdy, \\
	\label{eq:stimaSob}
	\int_{\Omega_{*}} e^x \hat{u}_{k}^2 \, dxdy \leq 4
	\int_{\Omega_{*}} e^x (\hat{u}_{k})_x^2\, dx dy.
\end{align}
Furthermore using \eqref{<0} yields 
\begin{align}
	\label{eq:KK}
	&\int_{\Omega_*} e^x \hat{u}_k^{4} \, dxdy \leq 2
	\int_{\Omega_*} e^x \hat{u}_k^{2} \, dxdy,\\
	& 
	\label{eq:KKK}
	\int_{\Omega_*} e^x \abs{\nabla_{y} \hat{u}_k}^{2} \, dxdy \leq
	\int_{\Omega_*} e^x \hat{u}_k^{2} \, dxdy.
\end{align}

\begin{proof}[Proof of theorem \ref{main}]
	We prove (i) first. Let $\mu =
	\norm{v_{+}}_{H^{1}_{0}(\Omega_{y})}$ and $x_{k} $ be the largest
	real number $\bar{x}$ satisfying
	\begin{equation}
		\label{eq:stimeLimite}
		\int_{\bar{x}-1}^{\bar{x}} \int_{\Omega_{y}} (\abs{\nabla_{y}
		\hat{u}_{k}(x,y) - \nabla_{y} v_{+}(y)}^{2} +
		\abs{\hat{u}_{k}(x,y) - v_{+}(y)}^{2})dydx = \frac{\mu}{8}
	\end{equation}
	and 
	\begin{equation*}
		{\int_{{z}-1}^{{z}}} \int_{\Omega_{y}} (\abs{\nabla_{y}
		\hat{u}_{k}(x,y) - \nabla_{y} v_{+}(y)}^{2} +
		\abs{\hat{u}_{k}(x,y) - v_{+}(y)}^{2})dydx < \frac{\mu}{8}
	\end{equation*}
	if $z < \bar{x}$. From \eqref{K4}, \eqref{eq:stimaSob},
	\eqref{eq:KK} and \eqref{eq:KKK}, we deduce that for all $z < 0$
	\begin{equation*}
		\int_{{z}-1}^{{z}} \int_{\Omega_{y}} (\abs{\nabla_{y}
		\hat{u}_{k}(x,y)}^{2} + \abs{\hat{u}_{k}(x,y)}^{2})dydx \to 0
	\end{equation*}
	as $k \to +\infty$. This implies $x_{k} \to -\infty$. Define
	\begin{equation} 
		\label{ap1}
		w_k(x,y)= 
		\begin{cases}
			\hat{u}_k (x+x_k,y) & \text{if } x \leq -x_k, \\
			0 & \text{if } x > -x_k.
		\end{cases}
	\end{equation}
	It is clear that $w_{k}(x,y) \to v_{+}(y)$ as $x \to -\infty$ and
	$w_{k}(x,y) \to 0$ as $x \to +\infty$. Along a subsequence
	$w_{k}(x,y) \to U(x,y)$ in $C^{2}_{\text{loc}}$, and $U$ is a
	bounded solution of \eqref{te}. By 
	\eqref{eq:stimeLimite} 
	\begin{equation*}
		\int_{-1}^{0} \int_{\Omega_{y}} (\abs{\nabla_{y} U(x,y) -
		\nabla_{y} v_{+}(y)}^{2} + \abs{U(x,y) - v_{+}(y)}^{2})dydx =
		\frac{\mu}{8}, 
	\end{equation*}
	which ensures that $U(x,y)$ is a nontrivial solution of
	\eqref{te}. We remark that for all $a,b \in \mathbb{R}$ and $a <
	b$,
	\begin{equation*}
		\int_{a}^{b} \int_{\Omega_{y}} U^{2}_{x}(x,y) \, dx dy \leq
		\lim_{k \to +\infty} \int_{a}^{b} \int_{\Omega_{y}}
		(w_{k})^{2}_{x}(x,y) \, dx dy.
	\end{equation*}
	From the proof of \eqref{i0}, we know
	\begin{equation*}
		\int_{-\infty}^{0} \int_{\Omega_{y}} (\hat{u}_{k})_{x}^{2} \,
		dx dy
	\end{equation*}
	is bounded. Hence $U_{x} \in L^{2}(\Omega)$ and $U_{x}(x,y) \to 0$
	as $x \to \pm\infty$. Arguing like Lemma \ref{uk-1}, we deduce
	that
	\begin{equation*}
		v_{-\infty}(y) = \lim_{x \to -\infty} U(x,y), \qquad
		v_{\infty}(y) = \lim_{x \to +\infty} U(x,y),
	\end{equation*}
	where $v_{-\infty}$ and $v_{\infty}$ are the solutions of
	\eqref{b}.
	
	As in the proof of \eqref{eq:eee2}, we have 
	\begin{equation}
		\label{eq:eee3}
		\begin{aligned}
			c^2\int_{a}^{b}\int_{\Omega_y} U_x^2 \, dydx &= -
			\int_{\Omega_y}\left( \frac{1}{2} \abs{\nabla_y U(b, y)}^2
			+ F(U(b, y)) \right) \, dy \\
			&\quad + \int_{\Omega_y}\left( \frac{1}{2} \abs{\nabla_y
			U(a, y)}^2 + F(U(a, y)) \right) \, dy \\
			&\quad + \frac{c^2}{2} \int_{\Omega_y} \left( U_x^2(a, y) -
		U_x^2(b, y) \right) \, dy.
		\end{aligned}
	\end{equation}
	Letting $a \to -  \infty$ and $b \to +\infty$ gives
	\begin{equation} \label{v-+}
		-J[v_{-\infty}] + J[v_{\infty}] =
		c^2\int_{-\infty}^{\infty}\int_{\Omega_y} U_x^2 \, dydy > 0.
	\end{equation}
	This implies $v_{-\infty}=u_+$ and $v_{\infty}=0$, which completes
	the proof of (i).
	
	The proof of (iii) is trivial. It remains to show (ii). Since $J$
	has finite number of critical points in $H^1_0(\Omega_y)$, there
	is a subsequence of $\{\hat u_k\}$, still denoted by $\{\hat
	u_k\}$, such that
	\begin{equation*}
		\lim_{x\to -\infty} \hat u_k(x,y) = u_*(y)
	\end{equation*}
	and $u_*$ is a solution of \eqref{b}. With a slight modification,
	we obtain a bounded nontrivial solution $U(x,y)$ of \eqref{te} and
	\eqref{v-+} holds. Clearly $v_{-\infty}=u_*$. Then setting
	$v_{\infty}=u^*$ completes the proof.
        
\end{proof}

\nocite{Allen_Cahn_1979, Berestycki_Hamel_Nadirashvili_2010}
 
\providecommand{\bysame}{\leavevmode\hbox to3em{\hrulefill}\thinspace}
\providecommand{\MR}{\relax\ifhmode\unskip\space\fi MR }
\providecommand{\MRhref}[2]{%
  \href{http://www.ams.org/mathscinet-getitem?mr=#1}{#2}
}
\providecommand{\href}[2]{#2}


\begin{thebibliography}{10}

\bibitem{Allen_Cahn_1979}
S.~M. Allen and J.~W. Cahn, \emph{A microscopic theory for antiphase
boundary motion and its application to antiphase domain coarsening},
Acta Metall. \textbf{27} (1979), 1085--1095.

\bibitem{Ambrosetti_Malchiodi_2007}
Antonio Ambrosetti and Andrea Malchiodi, \emph{Nonlinear analysis and
semilinear elliptic problems}, Cambridge Studies in Advanced
Mathematics, vol. 104, Cambridge University Press, Cambridge, 2007.
\MR{2292344 (2008k:35129)}

\bibitem{Aronson_Weinberger_1975}
D.~G. Aronson and H.~F. Weinberger, \emph{Nonlinear diffusion in
population genetics, combustion, and nerve pulse propagation}, Partial
differential equations and related topics ({P}rogram, {T}ulane
{U}niv., {N}ew {O}rleans, {L}a., 1974), 1975, pp.~5--49. Lecture Notes
in Math., Vol. 446. \MR{0427837}
  
\bibitem{BTW} 
{J. M. Ball, A Taheri and M. Winter, Local minimizers in
micromagnetics and related problems. Calc. Var. Partial Differential
Equations 14 (2002), no. 1, 1-27.}

\bibitem{BLL} 
H. Berestycki, B. Larrouturou and P.-L. Lions, Multi-dimensional
travelling wave solutions of a flame propagation model. Arch. Rational
Mech. Anal. 111 (1990), no. 1, 33-49.

\bibitem{Berestycki_Hamel_Nadirashvili_2010}
Henri Berestycki, Fran\c{c}ois Hamel, and Nikolai Nadirashvili,
\emph{The speed of propagation for {KPP} type problems. {II}.
{G}eneral domains}, J. Amer. Math. Soc. \textbf{23} (2010), no.~1,
1--34. \MR{2552247}
  
  \bibitem{Berestycki_Nirenberg_1991}
H.~Berestycki and L.~Nirenberg, \emph{On the method of moving planes
and the sliding method}, Bol. Soc. Brasil. Mat. (N.S.) \textbf{22}
(1991), no.~1, 1--37. \MR{1159383}

\bibitem{Berestycki_Nirenberg_1992}
Henri Berestycki and Louis Nirenberg, \emph{Travelling fronts in
cylinders}, Ann. Inst. H. Poincar\'{e} Anal. Non Lin\'{e}aire
\textbf{9} (1992), no.~5, 497--572. \MR{1191008}

\bibitem{Chen_1990}
Chao-Nien Chen, \emph{Multiple solutions for a class of nonlinear
{S}turm-{L}iouville problems on the half line}, J. Differential
Equations \textbf{85} (1990), no.~2, 236--275. \MR{1054550}

\bibitem{Chen_1990-1}
\bysame, \emph{Uniqueness and bifurcation for solutions of nonlinear
{S}turm-{L}iouville eigenvalue problems}, Arch. Rational Mech. Anal.
\textbf{111} (1990), no.~1, 51--85. \MR{1051479}

\bibitem{Chen_1993}
\bysame, \emph{Uniqueness of solutions of some second order
differential equations}, Differential Integral Equations \textbf{6}
(1993), no.~4, 825--834. \MR{1222303}
 
 \bibitem{dKW} 
{Manuel del Pino, Michal Kowalczyk and Juncheng Wei, Entire solutions
of the Allen-Cahn equation and complete embedded minimal surfaces of
finite total curvature in $\mathbb{R}^3$, J. Differential Geom. 93,
(2013), 67-131.}

\bibitem{Fife_Mcleod_1977}
Paul~C. Fife and J.~B. McLeod, \emph{The approach of solutions of
nonlinear diffusion equations to travelling front solutions}, Arch.
Ration. Mech. Anal. \textbf{65} (1977), no.~4, 335--361. \MR{0442480}

\bibitem{Gallay_Risler_2007}
Thierry Gallay and Emmanuel Risler, \emph{A variational proof of
global stability for bistable travelling waves}, Differential Integral
Equations \textbf{20} (2007), no.~8, 901--926. \MR{2339843}

\bibitem{Gardner_1986}
Robert Gardner, \emph{Existence of multidimensional travelling wave
solutions of an initial-boundary value problem}, J. Differential
Equations \textbf{61} (1986), no.~3, 335--379. \MR{829368}

\bibitem{Gilbarg_Trudinger_1983}
David Gilbarg and Neil~S. Trudinger, \emph{Elliptic partial
differential equations of second order}, second ed., Grundlehren der
Mathematischen Wissenschaften [Fundamental Principles of Mathematical
Sciences], vol. 224, Springer-Verlag, Berlin, 1983. \MR{737190}

\bibitem{Heinz_1986}
Hans-Peter Heinz, \emph{Nodal properties and bifurcation from the
essential spectrum for a class of nonlinear {S}turm-{L}iouville
problems}, J. Differential Equations \textbf{64} (1986), no.~1,
79--108. \MR{849666}

\bibitem{Heinz_1986-1}
\bysame, \emph{Nodal properties and variational characterizations of
solutions to nonlinear {S}turm-{L}iouville problems}, J. Differential
Equations \textbf{62} (1986), no.~3, 299--333. \MR{837759}

\bibitem{Heinz_1987}
\bysame, \emph{Free {L}justernik-{S}chnirelman theory and the
bifurcation diagrams of certain singular nonlinear problems}, J.
Differential Equations \textbf{66} (1987), no.~2, 263--300.
\MR{871998}

\bibitem{Heinze_2001}
S.~Heinze, \emph{A variational approach to traveling waves}, Tech.
Report~85, Max Planck Institute for Mathematical Sciences, 2001.

\bibitem{Lucia_Muratov_Novaga_2008}
M.~Lucia, C.~B. Muratov, and M.~Novaga, \emph{Existence of traveling
waves of invasion for {G}inzburg-{L}andau-type problems in infinite
cylinders}, Arch. Ration. Mech. Anal. \textbf{188} (2008), no.~3,
475--508. \MR{2393438}

\bibitem{Lucia_Muratov_Novaga_2004}
Marcello Lucia, Cyrill~B. Muratov, and Matteo Novaga, \emph{Linear vs.
nonlinear selection for the propagation speed of the solutions of
scalar reaction-diffusion equations invading an unstable equilibrium},
Comm. Pure Appl. Math. \textbf{57} (2004), no.~5, 616--636.
\MR{2032915}
  
\bibitem{M} 
L. Modica, The gradient theory of phase transitions and the minimal
interface criterion. Arch. Rat. Mech. Anal. 98 (1987),123-142.


\bibitem{Muratov_2004}
C.~B. Muratov, \emph{A global variational structure and propagation of
disturbances in reaction-diffusion systems of gradient type}, Discrete
Contin. Dyn. Syst. Ser. B \textbf{4} (2004), no.~4, 867--892.
\MR{2082914}

\bibitem{Rabinowitz_1971}
Paul~H. Rabinowitz, \emph{A note on a nonlinear eigenvalue problem for
a class of differential equations}, J. Differential Equations
\textbf{9} (1971), 536--548. \MR{0273099}

\bibitem{Rabinowitz86-1}
\bysame, \emph{Minimax methods in critical point theory with
applications to differential equations}, CBMS Regional Conference
Series in Mathematics, vol.~65, Published for the Conference Board of
the Mathematical Sciences, Washington, {DC}, 1986. \MR{MR845785
(87j:58024)}
  
\bibitem{RS} 
{Paul~H. Rabinowitz and Ed Stredulinsky, Mixed states for an
Allen-Cahn type equation. Dedicated to the memory of J\"urgen K.
Moser. Comm. Pure Appl. Math. 56 (2003), no. 8, 1078-1134.}

\bibitem{Reineck_1988}
James~F. Reineck, \emph{Travelling wave solutions to a gradient
system}, Trans. Amer. Math. Soc. \textbf{307} (1988), no.~2, 535--544.
\MR{940216}

\bibitem{Rinzel_Terman_1982}
John Rinzel and David Terman, \emph{Propagation phenomena in a
bistable reaction-diffusion system}, {SIAM} J. Appl. Math. \textbf{42}
(1982), no.~5, 1111--1137. \MR{673529}

\bibitem{Risler_2008}
Emmanuel Risler, \emph{Global convergence toward traveling fronts in
nonlinear parabolic systems with a gradient structure}, Ann. Inst. H.
Poincar\'{e} Anal. Non Lin\'{e}aire \textbf{25} (2008), no.~2,
381--424. \MR{2400108}
  
\bibitem{S}  
A. Scheel, Coarsening fronts. Arch. Ration. Mech. Anal. 181 (2006),
no. 3, 505-534.

\bibitem{Vega_1993}
Jos\'{e}~M. Vega, \emph{The asymptotic behavior of the solutions of
some semilinear elliptic equations in cylindrical domains}, J.
Differential Equations \textbf{102} (1993), no.~1, 119--152.
\MR{1209980}

\bibitem{Vega_1993-1}
\bysame, \emph{Travelling wavefronts of reaction-diffusion equations
in cylindrical domains}, Comm. Partial Differential Equations
\textbf{18} (1993), no.~3-4, 505--531. \MR{1214870}

\bibitem{Volpert_Volpert_Volpert_1994}
Aizik~I. Volpert, Vitaly~A. Volpert, and Vladimir~A. Volpert,
\emph{Traveling wave solutions of parabolic systems}, Translations of
Mathematical Monographs, vol. 140, American Mathematical Society,
Providence, {RI}, 1994, Translated from the Russian manuscript by
James F. Heyda. \MR{1297766}

\bibitem{Willem_96}
Michel Willem, \emph{Minimax theorems}, Progress in Nonlinear
Differential Equations and their Applications, 24, Birkh\"auser Boston
Inc., Boston, {MA}, 1996. \MR{MR1400007 (97h:58037)}

\end{thebibliography}
\end{document}